\newtheorem{theorem}{Theorem}[section]
\newtheorem{thm}[theorem]{Theorem}
\newtheorem{lem}[theorem]{Lemma}
\newtheorem{proposition}[theorem]{Proposition}
\newtheorem{prop}[theorem]{Proposition}
\newtheorem{corollary}[theorem]{Corollary}
\newtheorem{assumption}[theorem]{Assumption}
\theoremstyle{definition}
\newtheorem{defn}[theorem]{Definition}
\newtheorem{ex}[theorem]{Example}
\theoremstyle{remark}
\newtheorem{rem}[theorem]{Remark}
\numberwithin{equation}{section}
 \DeclareMathAlphabet{\mathpzc}{OT1}{pzc}{m}{it}
 \newcommand{\E}{\mathbb{E}}            
 \newcommand{\e}{\varepsilon}
 \newcommand{\p}{\partial}
 \newcommand{\N}{\mathbb{N}}
 \newcommand{\R}{\mathbb{R}}
 \newcommand{\PP}{\mathbb{P}}
 \newcommand{\HH}{\mathbb{H}}
 \newcommand{\mcl}{\mathcal}
 \newcommand{\Be}{\begin{equation}}
 \newcommand{\Ee}{\end{equation}}
 \newcommand{\Bs}{\begin{split}}
 \newcommand{\Es}{\end{split}}
  \newcommand{\Bes}{\begin{equation*}}
 \newcommand{\Ees}{\end{equation*}}
 \newcommand{\BT}{\begin{thm}}
 \newcommand{\ET}{\end{thm}}
 \newcommand{\Bp}{\begin{proof}}
 \newcommand{\Ep}{\end{proof}}
 \newcommand{\BL}{\begin{lem}}
 \newcommand{\EL}{\end{lem}}
 \newcommand{\BP}{\begin{proposition}}
 \newcommand{\EP}{\end{proposition}}
 \newcommand{\BC}{\begin{corollary}}
 \newcommand{\EC}{\end{corollary}}
 \newcommand{\BR}{\begin{rem}}
 \newcommand{\ER}{\end{rem}}
 \newcommand{\BD}{\begin{defn}}
 \newcommand{\ED}{\end{defn}}
 \newcommand{\BI}{\begin{itemize}}
 \newcommand{\EI}{\end{itemize}}
 \newcommand{\eqn}{equation}
 \newcommand{\tl}{\tilde}
 \newcommand{\bg}{\big}
 \newcommand{\dif}{{\rm d}}
\begin{document}
\title
[Exponential mixing for SPDEs with highly degenerate L\'evy noise]
{Exponential mixing for SPDEs driven by highly degenerate L\'evy noises}

\author[X. Sun Y. Xie]{Xiaobin {\sc Sun}\ ,
\ Yingchao {\sc Xie} }\thanks{X. Sun and Y. Xie are supported by the National
Natural Science Foundation of China (11271169) and the
Project Funded by the Priority Academic Program Development of
Jiangsu Higher Education Institutions.  Lihu Xu is supported by the following grants: Science and Technology Development Fund Macao S.A.R FDCT  049/2014/A1,  MYRG2015-00021-FST}
\address{School of Mathematics and Statistics\\  Jiangsu Normal University\\  Xuzhou 221116, China}
\email{xbsun@jsnu.edu.cn,   ycxie@jsnu.edu.cn }

\author[L. Xu]{Lihu Xu}
\address{Department of Mathematics,
Faculty of Science and Technology,
University of Macau, E11
Avenida da Universidade, Taipa,
Macau, China}
\email{lihuxu@umac.mo}


\maketitle

\begin{abstract} \label{abstract}
By a coupling method, we prove that a family of stochastic partial differential equations (SPDEs) driven by
highly degenerate pure jump L\'evy noises are exponential mixing. These pure jump L\'evy noises include
$\alpha$-stable process with $\alpha \in (0,2)$.

\end{abstract}

\section{Introduction}
Let $\mathbb{H}$ be a Hilbert space with a complete orthonormal basis $\{e_k\}_{k\in\mathbb{N}}$. Let $A:\mcl D(A)\subset\mathbb{H}\to\mathbb{H}$ be a self-adjoint operator such that
$$Ae_n=-\lambda_n e_n, \ \ \  \ n \in \N$$
where $0<\lambda_1 \le \lambda_2 \le ...$ and $\lim_{n \rightarrow \infty}\lambda_n=\infty$.
We are concerned with the following stochastic PDEs:
\Be \label{e:MaiSPDE}
\dif X_t=[A X_t+F(X_t)] \dif t+\dif L_t
\Ee
where $F: \mathbb{H} \rightarrow \mathbb{H}$ is bounded and Lipschitz and $L_t$ is a $D$-dimensional pure jump L\'evy process on the subspace ${\rm span}\{e_1,...,e_D\}$ (see Assumption \ref{a:} below).
Before giving the main theorem, let us first point out that the problem \eqref{e:MaiSPDE} is well-posed.
By the same method as in \cite[Sect. 5.1]{PZ09}, we can show that
for any initial data $x \in \HH$, equation \eqref{e:MaiSPDE} has a unique mild solution
$(X^x(t))_{t \ge 0}$ with Markov property as follows:
\
\begin{\eqn} \label{e:MilSol}
X^x(t)=e^{At}x+\int_0^t e^{A(t-s)} F(X^x(s))\dif s+\int_0^t e^{A(t-s)}
\dif L_s.
\end{\eqn}
Moreover, $(X^x(t))_{t \ge 0}$ has a c$\grave{a}$dl$\grave{a}$g version in $\HH$ since $L_t$ is finite dimension.
Stochastic PDEs driven by non-degenerate L\'evy noise have been intensively studied in the past decades, see
\cite{AWZ98,DPZ96,DonXie11,FuXi09, Ku09, MaRo09, Mas, PeZa07, PZ09} and the references therein.

Eq. \eqref{e:MaiSPDE} is a highly degenerate stochastic partial differential equations (SPDEs) with L\'evy type noises.
As $L_t$ is highly degenerate
Wiener type or kick type noises, its ergodicity and related problems have been intensively studied recently, see \cite{Hai02, HM06, HaiMat11, Od07, Od09}
for Wiener type noises and \cite{KS-2001,KukShi02,Shi04,nersesyan-2008} for kick type noises. When $L_t$ is highly degenerate
L\'evy noises, to our knowledge, there seem no ergodicity results. One aim of our paper is to partly fill in this gap.

The third author of this paper studied in \cite{Xu14} a 2d degenerate SDE driven by 1d L\'evy noises, as the dissipation in the direction not driven by noises is sufficiently strong, the stochastic system is exponentially mixing. This paper will prove the same result for highly degenerate SPDE \eqref{e:MaiSPDE}, and adopt some notations and auxiliary lemmas in \cite{Xu14} for readers' convenience. We shall use a similar approach as in \cite{Xu14}
to proving exponential ergodicity, but we have to conquer some difficulties due to the infinite dimension (see Sections \ref{s:Ex} and \ref{s:ConCou} below). Moreover, our new coupling construction is much more involved and the proof is simplified with a different strategy.

In section \ref{s:Ex}, we give an example similar to Example 2.9 in \cite{PSXZ11}. The latter example shows that
a one dimensional SPDE driven by non-degenerate $\alpha$-stable noise with $\alpha \in (1,2)$ is exponential mixing, while
the latter one in this paper indicates that as $\alpha$-stable noise is highly degenerate, the SPDE can be $d$-dimensional for all
$d \ge 1$ and $\alpha$ can be in $(0,2)$.
The two restrictions $d=1$ and $\alpha \in (1,2)$ in Example 2.9 of \cite{PSXZ11} are hard to be removed due to the limitation of the Harris' approach to ergodicity (see also \cite{DoXuZh11,DoXuZh14} for some other examples). So, from these two examples, we can see that our coupling approach has big advantage for studying the ergodicity of stochastic L\'evy type systems .

The structure of the paper is as follows. The remaining part of this section introduces the notations and gives the main theorem. Section 2
gives an example to which our main theorem is applicable and shows that L\'evy type noises include $\alpha$-stable noise with $\alpha \in (0,2)$.
We construct a coupling Markov process in the 3rd section and prove its properties which are important for estimating the stopping times in Section 4.
In the last section, we prove the main theorem with a strategy given at the beginning.
\subsection{Some preliminary of L\'evy process (\cite{Ber96}) and the assumptions}
Let $L_t$ be a D-dimensional L\'evy process with L\'evy measure $\nu$, denote
$$
\Delta L_t=L_t-L_{t-}.
$$
For any $K>0$, define
$$
\Gamma_{K}:=\{y\in\mathbb{R}^D: |y|\geq K\},\quad \gamma_{K}:=\nu(\Gamma_{K}).
$$
Note that $\gamma_{K}$ is a decreasing function of $K$ and $\gamma_{K}<\infty$ for any $K>0$.

For any $T>0$, define
\begin{equation}\label{stopping time}
\tau:=\inf\{t> T: |\Delta L_t|\geq K\}.
\end{equation}
Then $\tau$ is a stopping time with density
$$
\gamma_{K}\exp\{-\gamma_{K}(t-T)\}1_{\{t>T\}}.
$$
Define $\tau_0:=0$ and
$$
\tau_k:=\inf\left\{t>\tau_{k-1}+T: |\Delta L(t)| \ge K \right\} \ \ \
{\rm for \ all \ } k \ge 1.
$$
It is easy to see that $\{\tau_k\}_{k \ge 0}$ are a sequence of stopping times such that
\begin{equation} \label{e:TauKInd}
\{\tau_{k}-\tau_{k-1}\}_{k \ge 1} {\rm \ are \ independent \ and \ have \ the \ same \ density \ as}
\ \tau.
\end{equation}


\vskip 2mm

\begin{assumption}
\label{a:}
We assume that
\begin{itemize}
\item [(A1)] $\sup_{0 \le t<\infty} \E \left|\int_0^t e^{A (t-s)} \dif L_s\right|^p<\infty$ for
$p \in (0, \alpha)$ with some $\alpha>0$.
\item [(A2)] For some $K>0$, $\nu_K$ has a density $p_K$ such that for all $x_1, x_2 \in \R^D$,
\Bes
\int_{\R^D} |p_K(z-x_1)-p_K(z-x_2)| dz \le \beta_1 |x_1-x_2|^{\beta_2},
\Ees
where $\beta_1, \beta_2>0$ are constants only depending on $K$.
\item [(A3)] There exist some $M>0$ and some $\beta_0=\beta_0(K,M) \in (0,2)$ such that if $|x_1|+|x_2| \le M$,
\Bes
\int_{\R^D} |p_K(z-x_1)-p_K(z-x_2)| dz \le \beta_0.
\Ees
\item [(A4)] $\gamma_{K} \ge 2 \beta_2 \|F\|_{{\rm Lip}}$.
\end{itemize}
\end{assumption}

\begin{rem}
The number ``$2$''in ``$\gamma_{K} \ge 2 \beta_2 \|F\|_{{\rm Lip}}$'' of (A4) can be replaced by any number $c>1$.
We choose the special ``$2$'' to make the computation in sequel more simple. The number $M$ will be chosen in Theorem \ref{l:TauPro}.
\end{rem}

\subsection{Some notations for the further use}
Denote by $B_b(\HH)$ the Banach space of bounded Borel-measurable functions $f:\HH \rightarrow \R$ with the norm
$$
\|f\|_{0}:=\sup_{x \in \HH} |f(x)|.
$$
Denote by $L_b(\HH)$ the Banach space of global Lipschitz bounded functions $f: \HH \rightarrow \R$ with the norm
$$
\|f\|_{1}:=\|f\|_0+\|f\|_{{\rm Lip}}.
$$
where
$\|f\|_{{\rm Lip}}:=\sup_{x \neq y}\frac{|f(x)-f(y)|}{\|x-y\|_{\HH}}.$

\smallskip
Let $\mcl B(\HH)$ be the Borel $\sigma$-algebra on $\HH$ and let~$\mcl P(\HH)$ be
the set of probability measures on $(\HH,\mcl B(\HH))$. Recall that the total variation distance
between two measures $\mu_1, \mu_2 \in \mcl P(\HH)$ is defined by
$$
\|\mu_1-\mu_2\|_{\rm TV}
:=\frac{1}{2}\sup_{\stackrel{f \in B_b(\HH)}{\|f\|_0=1}} |\mu_1(f)-\mu_2(f)|
=\sup_{\Gamma\in\mcl B(\HH)}|\mu_1(\Gamma)-\mu_2(\Gamma)|.
$$
Given a random variable
$X$, we shall use
$$\mcl L(X) {\rm \ to \ denote \ the \ distribution \ of \ X.}$$

Let $\Pi$ be the orthogonal projection from $\HH$ to the subspace
${\rm span}\{e_1,...,e_D\}$. For any $x \in \HH$, define
\
\Be \label{e:Pi}
x_1=\Pi x, \ \ \  \ x_2=(Id-\Pi) x.
\Ee
For the further use, we denote
$$\HH_1:={\rm span}\{e_1,...,e_D\}, \ \ \ \HH_2:={\rm span}\{e_{D+1},e_{D+2},...,\}.$$
Then Eq. \eqref{e:MaiSPDE} can be written as
\
\Be  \label{e:X1Eqn}
\dif X_1(t)=[A X_1(t)+F_1(X(t))]\dif t+\dif L_t,
\Ee
\Be  \label{e:X2Eqn}
\dif X_2(t)=[A X_2(t)+F_2(X(t))]\dif t.
\Ee

Let us denote by $(P_t)_{t\geq 0}$ the Markov semigroup associated with Eq. ~\eqref{e:MaiSPDE}, i.e.
\begin{\eqn*}
P_t f(x):=\E\left[f(X^x(t))\right], \quad f \in B_b(\HH),
\end{\eqn*}
and by $(P^{*}_t)_{t \geq 0}$ the dual semigroup acting on $\mcl P(\HH)$.

\subsection{Main result}

Our \emph{main result} is the following ergodic theorem and it will be proven in the last section.

\begin{thm}\label{t:MaiThm}
Under Assumption \ref{a:}, if $D$ is sufficiently large,
then the system \eqref{e:MaiSPDE} is ergodic and exponentially mixing under the weak topology of
$\mcl P(\HH)$. More precisely, there exists a unique invariant measure
$\mu \in \mcl P(\HH)$ so that for any $p \in (0, \alpha)$ and
any measure $\tl \mu \in \mcl P(\HH)$ with finite $p^{\rm th}$ moment, we have
\begin{equation} \label{2.5}
\left|\int_{\HH} f \dif P^{*}_t \tl \mu-\int_{\HH} f \dif \mu\right| \leq C e^{-ct}\|f\|_1\left(1+\int_{\HH} |x|^{p}
\tl \mu(dx)\right) \ \ \ \ \forall \ f \in L_b(\HH),
\end{equation}
where $C,c$ depend on $p, K, \|F\|_{{\rm Lip}}, \|F\|_0, \lambda_1, \lambda_{D+1}$.
\end{thm}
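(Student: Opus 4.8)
The plan is to prove \eqref{2.5} by a coupling argument exploiting the splitting $\HH=\HH_1\oplus\HH_2$ into the directly forced low modes and the purely dissipative high modes. For two initial data $x,y\in\HH$ I will construct on a common probability space a pair of processes $(X_t,Y_t)$ with $\mcl L(X_t)=P^*_t\delta_x$ and $\mcl L(Y_t)=P^*_t\delta_y$, together with a coupling time $\sigma$ enjoying two properties. First, on $\{t\ge\sigma\}$ the low modes agree, $\Pi X_t=\Pi Y_t$. Second, once the low modes are identified the high-mode difference contracts: writing $Z_2:=(Id-\Pi)(X-Y)$ and using \eqref{e:X2Eqn} with $\Pi X=\Pi Y$, one gets $\frac{d}{dt}\|Z_2\|_\HH\le-(\lambda_{D+1}-\|F\|_{\rm Lip})\|Z_2\|_\HH$, so choosing $D$ large enough that $\lambda_{D+1}>\|F\|_{\rm Lip}$ yields $\|Z_2(t)\|_\HH\le e^{-c(t-\sigma)}\|Z_2(\sigma)\|_\HH$. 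Before $\sigma$ the bound $\|F\|_0<\infty$ keeps $\|Z_2\|_\HH$ from exploding. This reduces \eqref{2.5} to an exponential tail estimate for $\sigma$.

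The coupling is built on the large-jump times $\{\tau_k\}$ of Section 1, which by \eqref{e:TauKInd} arrive at a geometric rate governed by $\gamma_K$. At each $\tau_k$ I attempt to identify the low modes by coupling the large jump of $Y$ to that of $X$ through a maximal coupling of the shifted jump densities: the jump of $Y$ is that of $X$ displaced by the amount needed to make $\Pi X$ and $\Pi Y$ coincide, and by (A2)--(A3) the success probability is bounded below by a constant, uniformly over the region $|x|+|y|\le M$ furnished by Theorem \ref{l:TauPro}, since the total-variation overlap of the two translated densities $p_K$ is at least $1-\tfrac{\beta_0}{2}$ there. After an attempt succeeds, every subsequent jump of $Y$ is shifted deterministically to cancel the residual drift difference $F_1(X)-F_1(Y)$, keeping $\Pi X=\Pi Y$; because this difference is bounded by $\|F\|_{\rm Lip}\|Z_2\|_\HH$ and $\|Z_2\|_\HH$ is already contracting, the accumulated shift is summable, and by (A2) together with the balance condition (A4) $\gamma_K\ge 2\beta_2\|F\|_{\rm Lip}$ the total cost of maintaining the coupling over all later jumps stays finite. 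Hence the coupling is preserved for all time with probability bounded below, and Section 4 packages these facts into an exponential tail bound for $\sigma$, the number of failed attempts being stochastically dominated by a geometric variable.

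With the tail bound for $\sigma$ in hand I assemble the estimate. Assumption (A1), with the dissipativity of $A$ and boundedness of $F$, yields $\sup_{t\ge0}\E|X^x(t)|^p\le C(1+|x|^p)$ for $p\in(0,\alpha)$; this guarantees that the two copies revisit $\{|x|+|y|\le M\}$ often enough for the lower bound in (A3) to be applied, and it controls the pre-coupling contribution. Bounding $|P_t f(x)-P_t f(y)|\le\E|f(X_t)-f(Y_t)|$ and splitting over $\{t\ge\sigma\}$, where $|f(X_t)-f(Y_t)|\le\|f\|_{\rm Lip}\|Z_2(t)\|_\HH$, and over $\{t<\sigma\}$, where it is at most $2\|f\|_0$, then gives
\begin{equation*}
|P_t f(x)-P_t f(y)|\le C e^{-ct}\|f\|_1\bigl(1+|x|^p+|y|^p\bigr).
\end{equation*}
Existence and uniqueness of the invariant measure $\mu$ follow because this contraction makes $(P^*_t\tl\mu)_t$ Cauchy in the associated Wasserstein-type distance and forces any two invariant measures to coincide, while the moment bound gives $\int_\HH|y|^p\,\mu(dy)<\infty$. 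Finally, writing $\int f\,dP^*_t\tl\mu-\int f\,d\mu=\int\!\!\int\bigl(P_t f(x)-P_t f(y)\bigr)\,\tl\mu(dx)\,\mu(dy)$ by invariance of $\mu$ and inserting the per-pair bound produces \eqref{2.5}, with $c,C$ depending only on the stated quantities.

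The main obstacle is the construction and control of the coupling in the infinite-dimensional pure-jump setting: the low modes can be identified only through the jumps, and keeping them identified requires shifting every subsequent jump to cancel a drift difference that is itself driven by the high modes. Thus the contraction in $\HH_2$ (hence the need for $D$ large) and the maintenance cost measured by (A2)--(A4) must be balanced simultaneously. Ensuring that the two copies return to $\{|x|+|y|\le M\}$ frequently enough for the lower bound (A3) on the coupling probability to apply, while the accumulated shift remains summable, is the delicate point that Sections \ref{s:ConCou} and \ref{s:Ex} are designed to handle.
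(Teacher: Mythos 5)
Your high-level architecture matches the paper's: maximal coupling of the large-jump densities at the times $\tau_k$, contraction of the $\HH_2$-component via $\lambda_{D+1}>\|F\|_{\rm Lip}$ for $D$ large, return times to the ball $\{|x|+|y|\le M\}$ so that (A3) applies, moment bounds from (A1), and a geometric bound on the number of failed coupling attempts. But the central mechanism you propose --- identify the low modes at a single successful jump and then \emph{keep} $\Pi X_t=\Pi Y_t$ for all later $t$ by ``shifting every subsequent jump of $Y$ deterministically to cancel the residual drift difference $F_1(X)-F_1(Y)$'' --- does not work. The low-mode difference satisfies $\frac{\dif}{\dif t}(X_1-Y_1)=A(X_1-Y_1)+F_1(X)-F_1(Y)$ between jumps, and since $X_2\ne Y_2$ the forcing $F_1(X)-F_1(Y)$ is generically nonzero, so $X_1-Y_1$ leaves zero \emph{immediately and continuously}; a discrete shift applied only at the (exponentially spaced) large-jump times cannot cancel a continuously acting drift, so the identity $\Pi X_t=\Pi Y_t$ on $\{t\ge\sigma\}$ fails, and with it your pathwise contraction $\frac{\dif}{\dif t}\|Z_2\|_\HH\le-(\lambda_{D+1}-\|F\|_{\rm Lip})\|Z_2\|_\HH$ (which presupposes that the \emph{only} difference is in $\HH_2$). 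Moreover, deterministically displacing the jumps of $Y$ alters its law; (A2) only quantifies the total-variation cost of translating the single density $p_K$ and is used in the paper for exactly one re-coupling attempt per large jump, not to pay for an infinite sequence of maintenance shifts, and (A4) is used there to make $\E[e^{\beta_2\|F\|_{\rm Lip}\tl\tau_1}]$ finite, not as a summability condition for accumulated shifts.

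The paper's actual resolution is different in a way you need: it never keeps the low modes identified. At \emph{every} large-jump time $\tl\tau_{k+1}$ it performs a fresh maximal coupling of $p_K(\cdot-S_1^x(\tl\tau_{k+1}-))$ and $p_K(\cdot-S_1^y(\tl\tau_{k+1}-))$ (Lemmas \ref{l:CouLem1}--\ref{l:CouLem2}), and Proposition \ref{p:SquKOrd} shows that the \emph{full} distance at the discrete times contracts, $\|S^x(\tl\tau_{k+1})-S^y(\tl\tau_{k+1})\|_\HH\le\delta_k\|S^x(\tl\tau_k)-S^y(\tl\tau_k)\|_\HH$, except on an event of probability at most $\kappa\|S^x(\tl\tau_k)-S^y(\tl\tau_k)\|_\HH^{\beta_2}$ (resp.\ $\beta_0/2$ in the $M$-ball): the low modes become equal at the jump with high probability, and the high modes contract deterministically by Lemma \ref{l:SolEst}(2) with factor $\delta_k$ involving $\lambda_{D+1}$. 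Since the failure probability is proportional to the current (already geometrically small) distance to the power $\beta_2$, the failure probabilities are summable and the contraction persists forever with probability $>1/2$ (Lemma \ref{l:SigInf}); the stopping times $\sigma,\hat\sigma,\tl\sigma,\bar\sigma_k$ then package the restarts. You should replace your ``maintain the identification'' step by this ``re-couple at every large jump and track contraction of the full distance at discrete times'' step; without it the proof does not close.
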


\section{Examples and some preliminary estimates for the solution of Eq. \eqref{e:MaiSPDE}} \label{s:Ex}

\subsection{Some concrete examples for Eq. \eqref{e:MaiSPDE}}
We first claim that
\
\begin{proposition}  \label{p:StaCon}
$D$-dimensional rotationally symmetric $\alpha$-stable process $L_t$, with $0<\alpha<2$ and $D \in \N$,  satisfies
Assumption \ref{a:}.
\end{proposition}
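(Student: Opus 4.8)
The plan is to verify each of the four conditions (A1)--(A4) in Assumption \ref{a:} directly for the $D$-dimensional rotationally symmetric $\alpha$-stable process, using the explicit structure of its L\'evy measure and transition density. The L\'evy measure of such a process has the form $\nu(dy) = c_{D,\alpha} |y|^{-(D+\alpha)}\,dy$ on $\R^D\setminus\{0\}$, which immediately gives $\gamma_K = \nu(\Gamma_K) = c_{D,\alpha}\int_{|y|\ge K} |y|^{-(D+\alpha)}\,dy = c'_{D,\alpha} K^{-\alpha}$, a quantity that is finite for every $K>0$ and, crucially, that can be made \emph{arbitrarily large} by taking $K$ small. This last observation will be the key to satisfying (A4).

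For (A1), I would recall the self-similarity and the tail behavior of the stable process: the stochastic convolution $\int_0^t e^{A(t-s)}\,dL_s$ is a well-defined Ornstein--Uhlenbeck type process, and since $L_t$ is $\alpha$-stable, moments of order $p$ exist precisely for $p \in (0,\alpha)$. The dissipativity $\lambda_1>0$ together with the stationarity of the increments gives a uniform-in-$t$ bound on $\E|\int_0^t e^{A(t-s)}\,dL_s|^p$; this is the standard estimate for O-U processes driven by stable noise (cf. the well-posedness discussion citing \cite{PZ09}). For (A2) and (A3), the main point is the regularity of the density $p_K$ of the truncated measure $\nu_K$ (the part of $\nu$ restricted to $\{|y|<K\}$, suitably normalized). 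The transition density of the stable process is smooth and, restricted to a bounded region, its gradient is integrable, so the $L^1$-modulus of continuity $\int_{\R^D} |p_K(z-x_1)-p_K(z-x_2)|\,dz$ is controlled by $\beta_1|x_1-x_2|^{\beta_2}$ via a translation estimate $\int |p_K(z-h)-p_K(z)|\,dz \le \|\nabla p_K\|_{L^1}|h|$, giving (A2) with $\beta_2 = 1$; evaluating on the bounded set $|x_1|+|x_2|\le M$ then yields (A3) with some $\beta_0<2$ (indeed $\beta_0$ can be taken small, well within $(0,2)$).

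The main obstacle, and the step requiring the most care, is reconciling (A4) with (A2)--(A3) \emph{simultaneously}: (A4) demands $\gamma_K \ge 2\beta_2\|F\|_{\mathrm{Lip}}$, i.e.\ a \emph{large} jump intensity, while the constants $\beta_1,\beta_2$ in the Lipschitz bound (A2) themselves depend on $K$ and may degrade as $K$ shrinks. The strategy is to exploit that $\gamma_K \sim c'_{D,\alpha}K^{-\alpha} \to \infty$ as $K\to 0^+$, which grows faster than any polynomial control one loses in $\beta_2$; more precisely I would track the $K$-dependence of $\beta_2$ (which can be kept bounded, $\beta_2=1$, since the translation estimate produces a linear modulus independent of the truncation level) so that choosing $K$ small enough forces $\gamma_K$ above the threshold $2\beta_2\|F\|_{\mathrm{Lip}}$ while (A2) and (A3) continue to hold with the same $\beta_2$. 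Verifying that this single choice of $K$ makes all four conditions hold at once is the crux of the proposition, and I would conclude by stating the explicit admissible range of $K$.
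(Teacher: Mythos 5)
Your overall outline (verify (A1)--(A4) one at a time, and use $\gamma_K=c_{D,\alpha}'K^{-\alpha}\to\infty$ as $K\downarrow 0$ for (A4)) matches the paper's, and your observation that (A4) only involves $\beta_2$, which can be kept equal to $1$ independently of $K$, is exactly the right resolution of the tension you flag. But your treatment of (A2)--(A3) has two concrete problems. First, you misidentify $p_K$: in this paper $\nu_K$ is the normalized restriction of the L\'evy measure to the \emph{large} jumps $\Gamma_K=\{|y|\ge K\}$, so $p_K(z)=C|z|^{-(D+\alpha)}\one_{\{|z|\ge K\}}$ --- not the small-jump part and not the transition density of the process. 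This $p_K$ is \emph{not} smooth: it has a jump discontinuity across the sphere $\{|z|=K\}$, so the translation estimate $\int|p_K(z-h)-p_K(z)|\,dz\le\|\nabla p_K\|_{L^1}|h|$ fails as written (the distributional gradient has a singular surface-measure part). The conclusion ($\beta_2=1$) is still true, but it requires either the BV version of the translation estimate or, as the paper does, an explicit computation: writing $f(x)=2-4g(x)$ with $g$ an integral over a half-space intersected with $\{|z|\ge K\}$, bounding $\sup_{|x|\le K}|f'(x)|$ via spherical coordinates, and using the trivial bound $f\le 2\le\tfrac{2}{K}|x|$ for $|x|>K$.

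Second, deriving (A3) from (A2) as you propose only gives $\phi(x_1,x_2)\le\beta_1|x_1-x_2|\le\beta_1 M$ on $\{|x_1|+|x_2|\le M\}$, which is below $2$ only when $M<2/\beta_1$. Since $M$ is not free --- it is fixed later by Theorem \ref{l:TauPro} through a return-time argument and may be large --- this does not establish (A3) for the $M$ the paper actually needs. The paper's argument is different and works for every $M$: the supports $\{|z-x_1|\ge K\}$ and $\{|z-x_2|\ge K\}$ always overlap on a set of positive measure, so $\phi(x_1,x_2)<2$ pointwise, and continuity of $\phi$ together with compactness of $\{|x_1|+|x_2|\le M\}$ yields a uniform $\beta_0<2$. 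You should replace your deduction of (A3) by this compactness argument. (Your (A1) is asserted rather than proved; the paper gives a self-contained proof via the subordination $L_t=W_{S_t}$, conditioning on the subordinator and summing a geometric series, which you would need to supply or cite precisely.)
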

Before proving the proposition, we give an example below to which the assumptions of the paper applies, c.f. Example 2.9 in \cite{PSXZ11}.
\begin{ex} \label{comp}
Consider the following stochastic semilinear equation on $\mcl D=[0,\pi]^d$
with $d \geq 1$ and the Dirichlet boundary condition:
\Be
\label{e:SemLin}
\begin{cases}
\dif X(t,\xi)=[\Delta X(t,\xi)+F(X(t,\xi))]\dif t+\dif L_t(\xi), \\
X(0,\xi)=x(\xi), \\
X(t,\xi)=0, \ \ \xi \in \p \mcl D,
\end{cases}
\Ee
where $F$ is bounded Lipschitz, $L_t$ is a $D$-dimensional rotationally symmetric $\alpha$-stable processes with $\alpha \in (0,2)$ to be further specified below.

It is well known that $\Delta$ with Dirichlet boundary condition has the following
eigenfunctions
$$e_k(\xi)=\left(\frac{2}{\pi}\right)^{\frac d2}
\sin(k_1 \xi_1) \cdots \sin(k_d \xi_d), \quad k \in \N^d, \ \xi \in \mcl D.
$$
It is easy to see that $\Delta e_k=-|k|^2 e_k$,
i.e. $\gamma_k=|k|^2= k_1^2 + \ldots + k_d^2$ for all $k \in \N^d$.
We study the dynamics defined by~\eqref{e:SemLin} in the Hilbert space $\HH= L^2(\mcl D)$  with orthonormal basis $\{e_k\}_{k \in \N^d}$.


$L_t$ is a $D$-dimensional symmetric $\alpha$-stable processes on the subspace ${\rm span}\{e_1,...,e_D\}$. From our main result Theorem \ref{t:MaiThm}, for all $\alpha \in (0,2)$, as $D$ is sufficiently large, the stochastic system \eqref{e:SemLin} converges to equilibrium measure
exponentially fast.

Let us roughly compare our example with Example 2.9 of \cite{PSXZ11}, which has the same form as Eq. \eqref{e:SemLin} but with $d=1$ and $\alpha \in (1,2)$. The two restrictions $d=1$ and $\alpha \in (1,2)$ are hard to be removed due to the limitation of the Harris' approach to ergodicity. To use Harris' ergodicity theorem, one has to prove the strong Feller property which is true for Example 2.9 of  \cite{PSXZ11} when $d=1$ and $L_t$ is non-degenerate $\alpha$-stable noises with $\alpha>1$. So, from these two examples, we can see the advantage of our coupling approach to the ergodicity.
 \end{ex}

\begin{proof} [{\bf Proof of Proposition \ref{p:StaCon}}]
 Recall that $D$-dimensional rotationally symmetric $\alpha$-stable process $L_t$ has the following representation:
 $$L_t=W_{S_t}$$
 where $W_t=(W^1_t,\ldots, W^{D}_t)$ be a D-dimensional standard Brownian motion and $S_t$ is an $\alpha/2$-stable subordinator
 independent of $W_t$.

Let $\E^{\mathbb S}, \E^{\mathbb W}$ denote the partial integrations with respect to $S$ and $W$ respectively, we have
 \Be
 \begin{split}
 \E \left|\int_0^t  e^{A(t-s)} \dif L_s\right|^p&=\E^{\mathbb S}\left[\E^{\mathbb W}\left(\int_0^t  e^{A(t-s)} \dif W_{\ell_s}\right)^p \big|_{\ell.=S.}\right] \\
 & \le \E^{\mathbb S}\left[\left(\int_0^t  \sum_{i=1}^D e^{-2\lambda_i (t-s)} \dif \ell_s\right)^{p/2} \big|_{\ell.=S.}\right] \\
 &=\E\left(\int_0^t  \sum_{i=1}^D e^{-2\lambda_i (t-s)} \dif S_s\right)^{p/2} \\
 &\le D^{p/2} \E\left(\int_0^t e^{-2\lambda_1 (t-s)} \dif S_s\right)^{p/2}. \\
 \end{split}
 \Ee
Then,
\Bes
\begin{split}
&\E\left(\int_0^t e^{-2 \lambda_1(t-s)} \dif S_s\right)^{p/2}  \le \E \left(\sum_{k=0}^{[t]}\int_{k}^{k+1} e^{-2 \lambda_1 (t-s)} \dif S_s\right)^{p/2}  \\
& \le \sum_{k=0}^{[t]} \E\left(\int_{k}^{k+1} e^{-2 \lambda_1(t-s)} \dif S_s\right)^{p/2}
 \le \sum_{k=0}^{[t]}  e^{-p\lambda_1(t-k-1)} \E\left(S_{k+1}-S_k\right)^{p/2} \\
& =\E S^{p/2}_1 \sum_{k=0}^{[t]}  e^{-p\lambda_1(t-k-1)}
 \le \E S^{p/2}_1 e^{2 p \lambda_1} \sum_{k=0}^{\infty}  e^{-p k \lambda_1}.
\end{split}
\Ees
Thus, (A1) is immediately verified from the above estimates.

Let us now verify that (A2) holds for all $K>0$ (this is obviously stronger than (A2) itself).
For any $K>0$, the density of $\nu_K$ is
$$
p_K(z)=\frac{C}{|z|^{D+\alpha}}1_{\{|z|\geq K\}},
$$
where $C=C_{\alpha,D,K}$ depends on $\alpha, D, K$.

Without loss of generality, we assume $x_1=0$, $x_2=(x, 0,\ldots, 0)$ with $x \in \R$.
Denote $f(x):=\int_{\mathbb{R}^D}|p_K(z)-p_{K}(z-x_2)|dz$, note $f(0)=0$. We will show
\begin{eqnarray}
\sup_{|x| \leq K}|f'(x)|\leq C_K.\label{f}
\end{eqnarray}
On the other hand, as $|x|>K$, we have
$$f(x)\leq 2\leq\frac{2}{K}|x|=\frac{2}{K}|x_2|.$$
Combining the above two relations, we immediately get
\Be  \label{e:SimA2}
\int_{\mathbb{R}^D}|p_K(z)-p_{K}(z-x_2)|dz\leq \left(C_K\vee \frac{2}{K}\right)|x_2|.
\Ee
Hence, we verified that (A2) holds with $\beta_1=C_K\vee \frac{2}{K}$ and $\beta_2=1$.

Let now show (\ref{f}). We first divide $\R^D$ into several parts (for instance, see Figure 1 when $D=1$),
\begin{figure}[http]
\centering
\includegraphics[width=2.8in,height=2.2in]{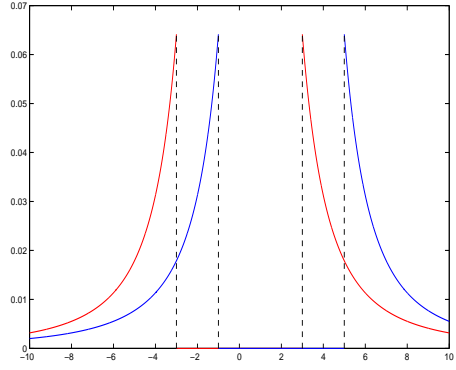}
\caption{The red and blue lines are the functions of $\frac{1}{|z|^{1+\alpha}}1_{\{|z|\geq K\}}$ and $\frac{1}{|z-2|^{1+\alpha}}1_{\{|z-2|\geq K\}}$ with $\alpha=\frac{3}{2}$ and $K=3$ respectively.}
\end{figure}

\begin{eqnarray*}
f(x)&=&\int_{\mathbb{R}^D}\left|\frac{C}{|z|^{D+\alpha}}1_{|z|\geq K}-\frac{C}{|z-x_2|^{D+\alpha}}1_{|z-x_2|\geq K}\right|dz\\
&=&2-4\int_{z_1\leq x/2, z^{2}_1+\cdots +z^{2}_D\geq K^2}\frac{C}{[(z_1-x)^2+z^{2}_2+\cdots +z^{2}_D]^{\frac{D+\alpha}{2}}}dz_1\cdots dz_D\\
&\hat=&2-4g(x).
\end{eqnarray*}
When $D=1$, we can easily get (\ref{f}) holds. The proof of $D=2$ or $D\geq 3$ are almost the same, so we only study the case of $D\geq 3$ for convenience, using the transformation of spherical coordinates, i.e.
\begin{eqnarray*}
z_1&=&r\cos\theta_1,\\
z_2&=&r\sin\theta_1\cos\theta_2,\\
&\ldots&\\
z_{D-1}&=&r\sin\theta_1\sin\theta_2\cdots\cos\theta_{D-1}, \\
z_{D}&=&r\sin\theta_1\sin\theta_2\cdots\sin\theta_{D-1},
\end{eqnarray*}
where $\theta_1,\ldots,\theta_{D-2}\in [0,\pi]$, $\theta_{D-1}\in [0, 2\pi]$, $r\geq 0$. Then, we have
\begin{eqnarray*}
g(x)&=&\int_{r\cos\theta_1\leq x/2, r\geq K}\frac{C r^{D-1}\sin^{D-2}\theta_1\sin^{D-3}\theta_2\cdots\sin\theta_{D-2}}{[(r\cos\theta_1-x)^2+r^2\sin^{2}\theta_1]^{\frac{D+\alpha}{2}}}d\theta_1\cdots d\theta_{D-1}dr\\
&=&C_{\alpha, D}\int_K^\infty \int^{\pi}_{arc\cos \frac{x}{2r}} \frac{r^{D-1} \sin^{D-2} \theta_1} {\left[(r \cos \theta_1-x)^2+r^2 \sin^2 \theta_1\right]^{\frac{D+\alpha}2}} dr d \theta_1.
\end{eqnarray*}
Take the derivative with respect to $x$, we get
\begin{eqnarray*}
g'(x)=\!\!\!\!\!\!\!\!\!\!&&C_{\alpha, D}\int_K^\infty  \frac{\left(1-\frac{x^2}{4r^2}\right)^{\frac{D-2}2}}{r^{\alpha+1}\sqrt{4r^2-x^2}}dr\\
&&+C_{\alpha, D}\int_K^\infty \int^{\pi}_{arc\cos \frac{x}{2r}} \frac{r^{D-1} \sin^{D-2} \theta_1 (r \cos \theta_1-x)} {\left[(r \cos \theta_1-x)^2+r^2 \sin^2 \theta_1\right]^{\frac{D+\alpha}2+1}} dr d \theta_1.
\end{eqnarray*}
Then, it is easy to see that
$$\sup_{|x| \le K}|g'(x)|<C_K,\quad \text{as} \quad |x|\leq K.$$
Hence, (\ref{f}) holds.

Since the supports of the functions $p_K(z-x_1)$ and $p_K(z-x_2)$ have overlaps, it holds that
\Bes
\begin{split}
\phi(x_1,x_2):&=\int_{\R^D} |p_K(z-x_1)-p_K(z-x_2)| dz \\
&<\int_{\R^D} p_K(z-x_1)dz+\int_{\R^D}p_K(z-x_2) dz=2.
\end{split}
\Ees
Since $\phi$ is a continuous function, for all $M>0$ there exists some $\beta_0 \in (0,2)$ depending
on $M$ and $K$ such that (A3) holds.

Since $L_{t}$ is $\alpha$-stable noise,
 $\gamma_K \rightarrow \infty$ as $K \downarrow 0$. Therefore,
(A4) is clearly true.
\end{proof}

\subsection{Some easy estimates about the solution}
In this subsection, we prove some easy estimates about the solution $X(t)$ of Eq.
\eqref{e:MaiSPDE}, which will play an essential role for estimating some
stopping times in the sections later.

\begin{lem} \label{l:SolEst}
The following statements hold:
\begin{enumerate}
\item For $x,y \in \HH$, $p \in (0, \alpha)$, we have
\Bes
\E \|X^x(t)\|_{\HH}^p \le (3^{p-1} \vee 1)  e^{-\lambda_1 pt} \|x\|_{\HH}^p+C \ \ \ \ \forall \ t \ge 0,
\Ees
\Bes
\E \|X^x(t)-X^y(t)\|_{\HH}^p \le (2^{p-1} \vee 1)  e^{-\lambda_1 pt} \|x-y\|_{\HH}^p+C \ \ \ \ \forall \ t \ge 0,
\Ees
where $a \vee b:=\max\{a,b\}$ for $a,b \in \R$ and $C$ depends on $p,\lambda_1,\|F\|_0$.
\item For $x, y \in \HH$, we have
\Bes \label{e:X1X2Bou}
\|X^x(t)-X^y(t)\|_{\HH}\le e^{t \|F\|_{\rm Lip}} \|x-y\|_{\HH},
\Ees
\Bes \label{e:X1X2HBou}
\|X^x_2(t)-X^y_2(t)\|_{\HH} \le \left(e^{-\lambda_{D+1} t}+\frac{\|F\|_{\rm Lip}}{\lambda_{D+1}+\|F\|_{\rm Lip}} \ e^{t \|F\|_{\rm Lip}}\right) \|x-y\|_{\HH},
\Ees
for all $t \ge 0$.
\end{enumerate}
\end{lem}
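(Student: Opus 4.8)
The plan is to establish the four estimates one at a time, but all of them rest on two structural facts. First, since $A$ is self-adjoint with $Ae_n=-\lambda_n e_n$, the semigroup $e^{At}$ is a contraction satisfying $\|e^{At}\|_{{\rm op}}\le e^{-\lambda_1 t}$ on $\HH$ and, because $e^{At}$ commutes with the projection $Id-\Pi$, satisfying $\|e^{At}\|_{{\rm op}}\le e^{-\lambda_{D+1}t}$ on $\HH_2$. Second, the stochastic convolution $\int_0^t e^{A(t-s)}\dif L_s$ is common to $X^x$ and $X^y$, so it cancels in every difference estimate; consequently the only genuinely probabilistic input in the whole lemma is Assumption (A1), which bounds $\E\big|\int_0^t e^{A(t-s)}\dif L_s\big|^p$ uniformly in $t$.

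For the first bound in part (1) I would start from the mild formula \eqref{e:MilSol}, write $X^x(t)$ as the sum of $e^{At}x$, the drift convolution, and the stochastic convolution, and apply the elementary inequality $(a+b+c)^p\le(3^{p-1}\vee 1)(a^p+b^p+c^p)$. The first term gives $e^{-\lambda_1 pt}\|x\|_{\HH}^p$; the drift term is bounded pathwise by $(\|F\|_0/\lambda_1)^p$ using $\|F\|_0$ together with $\int_0^t e^{-\lambda_1(t-s)}\dif s\le 1/\lambda_1$; and the expected $p$-th moment of the stochastic convolution is finite and $t$-independent by (A1). Absorbing the two $t$-independent contributions into $C$ (which then depends only on $p,\lambda_1,\|F\|_0$ and the (A1) constant) yields the claim.

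For the second bound in part (1) and the first bound in part (2) I would work with the difference $u(t):=X^x(t)-X^y(t)$, whose mild form $u(t)=e^{At}(x-y)+\int_0^t e^{A(t-s)}[F(X^x(s))-F(X^y(s))]\dif s$ carries no noise term. For part (1) the crude bound $\|F(X^x)-F(X^y)\|\le 2\|F\|_0$ makes the convolution at most $2\|F\|_0/\lambda_1$ deterministically, so $u(t)\le e^{-\lambda_1 t}\|x-y\|+2\|F\|_0/\lambda_1$, and $(a+b)^p\le(2^{p-1}\vee 1)(a^p+b^p)$ gives the stated inequality with $C=(2^{p-1}\vee 1)(2\|F\|_0/\lambda_1)^p$. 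For the first bound of part (2) I would instead use the Lipschitz estimate $\|F(X^x)-F(X^y)\|\le\|F\|_{{\rm Lip}}\|u\|$, so that $e^{\lambda_1 t}\|u(t)\|\le\|x-y\|+\|F\|_{{\rm Lip}}\int_0^t e^{\lambda_1 s}\|u(s)\|\dif s$; Gronwall's inequality applied to $e^{\lambda_1 t}\|u(t)\|$ then gives $\|u(t)\|\le e^{-\lambda_1 t}e^{\|F\|_{{\rm Lip}}t}\|x-y\|\le e^{\|F\|_{{\rm Lip}}t}\|x-y\|$.

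Finally, for the $\HH_2$-estimate I would project equation \eqref{e:X2Eqn}: since no noise enters $\HH_2$, the difference $X_2^x(t)-X_2^y(t)$ obeys a noise-free mild equation whose semigroup decays like $e^{-\lambda_{D+1}t}$, whence $\|X_2^x(t)-X_2^y(t)\|_{\HH}\le e^{-\lambda_{D+1}t}\|x-y\|_{\HH}+\|F\|_{{\rm Lip}}\int_0^t e^{-\lambda_{D+1}(t-s)}\|X^x(s)-X^y(s)\|_{\HH}\dif s$, using $\|x_2-y_2\|_{\HH}\le\|x-y\|_{\HH}$ and $\|F_2(X^x)-F_2(X^y)\|\le\|F\|_{{\rm Lip}}\|X^x-X^y\|_{\HH}$. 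Substituting the bound $\|X^x(s)-X^y(s)\|_{\HH}\le e^{\|F\|_{{\rm Lip}}s}\|x-y\|_{\HH}$ just proved and evaluating $\int_0^t e^{-\lambda_{D+1}(t-s)}e^{\|F\|_{{\rm Lip}}s}\dif s\le e^{\|F\|_{{\rm Lip}}t}/(\lambda_{D+1}+\|F\|_{{\rm Lip}})$ reproduces the claimed coefficient exactly. I expect no serious obstacle here: the cancellation of the common noise turns every difference estimate into a pathwise, deterministic computation, so the only delicate points are bookkeeping of constants and remembering to use the sharper decay rate $e^{-\lambda_{D+1}t}$ of the semigroup on $\HH_2$ rather than $e^{-\lambda_1 t}$.
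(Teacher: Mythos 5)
Your proposal is correct and follows essentially the same route as the paper: the mild formulation plus the contraction bounds $\|e^{At}\|\le e^{-\lambda_1 t}$ on $\HH$ and $e^{-\lambda_{D+1}t}$ on $\HH_2$, Assumption (A1) for the stochastic convolution, the crude $2\|F\|_0$ bound for the moment estimates, Gronwall for the Lipschitz bound, and substitution of that bound into the projected equation for the $\HH_2$ estimate. The constants and the evaluation of $\int_0^t e^{-\lambda_{D+1}(t-s)}e^{s\|F\|_{\rm Lip}}\,\dif s$ match the paper's computation exactly.
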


\begin{proof}
Denote
$$Z_A(t):=\int_0^t e^{A (t-s)} \dif L_s,$$
it is easy to see that $Z_A(t)$ is a D-dimensional stochastic process and
$$|Z_A(t)|=\|Z_A(t)\|_{\HH}.$$
By \eqref{e:MilSol} we have
\Bes
\begin{split}
\left\|X^x(t)\right\|_{\HH} & \le \left\|e^{At} x\right\|_{\HH}+\left\|\int_0^t e^{A(t-s)} F(X^x(s)) \dif s\right\|_{\HH}+\left|Z_A(t)\right|\\
& \le e^{-\lambda_1 t} \|x\|_{\HH}+\int_0^t e^{-\lambda_1(t-s)} \dif s \|F\|_0+\left|Z_A(t)\right|,
\end{split}
\Ees
and
\Bes
\begin{split}
\left\|X^x(t)-X^y(t)\right\|_{\HH} & \le \left\|e^{At} (x-y)\right\|_{\HH}+\left\|\int_0^t e^{A(t-s)} [F(X^x(s))-F(X^y(s))] \dif s\right\|_{\HH} \\
& \le e^{-\lambda_1 t} \|x-y\|_{\HH}+2\int_0^t e^{-\lambda_1(t-s)} \dif s \|F\|_0.
\end{split}
\Ees
The first statement follows from the above inequality and (A1) of Assumption \ref{a:}.
\vskip 1.5mm

Let us now prove the second statement. It is easy to have
\Be \label{e:Xx-Xy}
X^x(t)-X^y(t)=e^{At}(x-y)+\int_0^t e^{A(t-s)} \left[F(X^x(s))-F(X^y(s))\right] \dif s,
\Ee
which implies
\Bes
\|X^x(t)-X^y(t)\|_{\HH} \le \|x-y\|_{\HH}+\int_0^t  \|F\|_{\rm Lip} \|X^x(s)-X^y(s)\|_{\HH} \dif s.
\Ees
From this we immediately get the first inequality by Gronwall's inequality. It follows from \eqref{e:Xx-Xy} and the first inequality that
\Bes
\begin{split}
\|X^x_2(t)-X^y_2(t)\|_{\HH} & \le \|e^{A t}(x_2-y_2)\|_{\HH}+\int_0^t \|e^{-A(t-s)} F_2(X^x(s))-F_2(X^y(s))\|_{\HH}ds \\
& \le e^{-\lambda_{D+1} t} \|x-y\|_{\HH}+\int_0^t e^{-\lambda_{D+1}(t-s)} \|F\|_{\rm Lip} \|X^x(s))-X^y(s)\|_{\HH} ds \\
& \le e^{-\lambda_{D+1} t} \|x-y\|_{\HH}+\int_0^t e^{-\lambda_{D+1}(t-s)} \|F\|_{\rm Lip} \ e^{s \|F\|_{\rm Lip}} \|x-y\|_{\HH} ds.
\end{split}
\Ees
This immediately implies the second inequality.
\end{proof}

\section{Construction of the coupling} \label{s:ConCou}
In this section, let us first construct a coupling Markov process which will play an essential
role for proving our ergodicity result, and then prove a preliminary estimate about this coupling.


\subsection{Construction of the coupling}
\begin{lem}  \label{l:CouLem1}
Let $X^x(t)$ and $X^y(t)$ be the solutions to the equation \eqref{e:MaiSPDE} for any given $x$ and $y$ respectively. Let $\tau$ be the stopping time defined by \eqref{stopping time}.
Then, we have a probability space $(\tl \Omega, \tl {\mcl F}, \tl \PP)$,  not depending on $x$ and $y$, on which there exist a random time $\tl \tau$ and a coupling Markov process
$S^{x,y}(t)=(S^x(t),S^y(t))_{0 \le t \le \tau}$ on $(\tl \Omega, \tl {\mcl F}, \tl \PP)$ such that
\begin{itemize}
\item[(1)] $\tl \tau$, not depending on $x$ and $y$, has the same distribution as $\tau$;
\item[(2)] $\mcl L((S^x(t))_{0 \le t \le \tl \tau})=\mcl L((X^x(t))_{0 \le t \le \tau})$ and $\mcl L((S^y(t))_{0 \le t \le \tl \tau})=\mcl L((X^y(t))_{0 \le t \le \tau})$;
\item[(3)] the following equality holds:
$$\tl \PP \bigg(S_1^x(\tl \tau) \ne S_1^y(\tl \tau)\big|S_1^x(\tl \tau-), S_1^y(\tl \tau-)\bigg)= \frac{1}{2}\int_{\R^D} |p_{K}(z-\hat x_1)-p_{K}(z-\hat y_1)| d z,$$
where $\hat x_1=S_1^x(\tl \tau-)$ and $\hat y_1=S_1^y(\tl \tau-)$, and $S_1^x(\tl \tau-)$ and $S_1^y(\tl \tau-)$
are defined by the practice \eqref{e:Pi}.
\end{itemize}
\end{lem}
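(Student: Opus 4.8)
The plan is to drive both copies by a single shared noise up to the first large jump after time $T$, and to decouple them \emph{only} at that one distinguished jump through a maximal coupling. First I would fix a probability space $(\tl\Omega,\tl{\mcl F},\tl\PP)$, independent of $x$ and $y$, carrying a L\'evy process $\tl L$ with the same law as $L$ together with one independent uniform random variable $U$. Setting $\tl\tau:=\inf\{t>T:|\Delta \tl L_t|\ge K\}$, property~(1) is immediate from the definition \eqref{stopping time} of $\tau$ and from \eqref{e:TauKInd}. On $[0,\tl\tau)$ I let $S^x$ and $S^y$ be the mild solutions \eqref{e:MilSol} of \eqref{e:MaiSPDE} with initial data $x,y$ driven by the \emph{common} path $\tl L$; the pre-jump limits $\hat x:=S^x(\tl\tau-)$, $\hat y:=S^y(\tl\tau-)$ and their projections $\hat x_1,\hat y_1$ (via \eqref{e:Pi}) are then well defined, and since $\tl L\overset{d}{=}L$ the law of each path on $[0,\tl\tau)$ coincides with that of $X^x$, $X^y$ on $[0,\tau)$.

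At $\tl\tau$ I resample the large jump instead of adding the common increment $\Delta\tl L_{\tl\tau}$. Conditionally on $\{|\Delta L_\tau|\ge K\}$, the first large jump has law $\nu_K$ with density $p_K$ as in (A2), and it is independent of the path before $\tau$; hence $X_1^x(\tau)=\hat x_1+\Delta L_\tau$ has conditional density $p_K(\cdot-\hat x_1)$ and, likewise, $p_K(\cdot-\hat y_1)$ for the $y$-copy. I would then invoke the maximal coupling lemma used in \cite{Xu14}: for two probability densities $q_1,q_2$ on $\R^D$ there is a measurable map $(q_1,q_2,U)\mapsto(\zeta^x,\zeta^y)$ with $\zeta^x\sim q_1$, $\zeta^y\sim q_2$ and $\tl\PP(\zeta^x\ne\zeta^y)=\frac{1}{2}\int_{\R^D}|q_1-q_2|$, realized by placing the shared mass $\min(q_1,q_2)$ on the diagonal and the excess masses $(q_1-q_2)^+,(q_2-q_1)^+$ off it. Applying this with $q_1=p_K(\cdot-\hat x_1)$, $q_2=p_K(\cdot-\hat y_1)$ and the single variable $U$, I set $S_1^x(\tl\tau):=\zeta^x$ and $S_1^y(\tl\tau):=\zeta^y$, while the $\HH_2$-components carry no jump and keep their left limits. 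This gives property~(3) at once.

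It then remains to verify property~(2). On $[0,\tl\tau)$ it is the identification of laws already noted. At $\tl\tau$ the maximal coupling is marginal-preserving, so conditionally on $\hat x_1$ the variable $\zeta^x$ has density $p_K(\cdot-\hat x_1)$, which is exactly the conditional law of $X_1^x(\tau)$ given $X_1^x(\tau-)=\hat x_1$; after integrating out the interaction with the $y$-copy, $\zeta^x-\hat x_1\sim\nu_K$ independently of the pre-$\tl\tau$ path, matching the independence of $\Delta L_\tau$ from the history of $X^x$. Splicing the piece on $[0,\tl\tau)$ with the jump at $\tl\tau$ yields $\mcl L((S^x(t))_{0\le t\le\tl\tau})=\mcl L((X^x(t))_{0\le t\le\tau})$, and symmetrically for $y$. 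Jointly, $(S^x,S^y)$ is Markov because on $[0,\tl\tau)$ it is driven by a common noise and at $\tl\tau$ it performs a single state-dependent Markov transition.

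I expect the one real point of care to be organizational rather than conceptual: executing the maximal coupling at the \emph{random} time $\tl\tau$, with $(\hat x_1,\hat y_1)$ entering as parameters, while keeping $(\tl\Omega,\tl{\mcl F},\tl\PP)$ independent of $(x,y)$ and retaining joint measurability in $(x,y,\omega)$. The device that resolves this is precisely the representation of the maximal coupling as a fixed measurable function of the two densities and the single uniform $U$, combined with the strong Markov property of $\tl L$ at $\tl\tau$, which guarantees that the distinguished large jump is independent of the history up to $\tl\tau-$ and distributed with density $p_K$.
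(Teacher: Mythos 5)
Your proposal follows essentially the same route as the paper's proof: run both copies with a common noise $\tl L$ up to the first large jump after time $T$, then replace that single jump by a maximal coupling of the conditional densities $p_K(\cdot-\hat x_1)$ and $p_K(\cdot-\hat y_1)$, and verify the marginals by splicing. The only difference is cosmetic: where you sketch the measurable realization of the maximal coupling from one uniform variable $U$, the paper simply invokes Theorem 4.2 of Kuksin--Shirikyan \cite{KS-2001} on an auxiliary probability space to obtain the measurable family $\xi(u,v,\tl\omega_2)$ and then takes a product space.
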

\begin{proof}

 Take a copy $(\tl \Omega_1, \tl {\mcl F_1}, \tl \PP_1)$ of $(\Omega, \mcl F, \PP)$
and consider the following SPDEs on $(\tl \Omega_1, \tl{\mcl F}_1, \tl \PP_1)$:
\Be \label{e:SDE2x}
\begin{cases}
\dif X(t)=[A X(t)+F(X(t))] \dif t+\dif \tl L_t,   \\
X(0)=x,
\end{cases}
\Ee
and
\Be  \label{e:SDE2y}
\begin{cases}
\dif X(t)=[A X(t)+F(X(t))] \dif t+\dif \tl L_t,   \\
X(0)=y,
\end{cases}
\Ee
where $(\tl L_t)_{t \ge 0}$ has the same distribution as $(L_t)_{t \ge 0}$.

Define
\Be \label{e:TlTauDef}
\tl \tau=\inf\{t \ge T: |\Delta \tl L_t| \ge K\},
\Ee
it is easy to see that
$\tl \tau$ has the same distribution as $\tau$.
At the time $\tl \tau$, there is a jump $\tl \eta$ which is independent of
$\tl \tau$ and the processes $(X^x(t))_{0 \le t<\tl \tau}$ and
$(X^y(t))_{0 \le t<\tl \tau}$. We have
\Bes
X^x(\tl \tau)=X^x(\tl \tau-)+\tl \eta, \ \ X^y(\tl \tau)=X^y(\tl \tau-)+\tl \eta.
\Ees
Note that $\tl \eta$ is a random variable valued on ${\rm span}\{e_1,...,e_D\}$, then
\Bes
\begin{split}
&  X^x_1(\tl\tau)=X^x_1(\tl\tau-)+\tl \eta, \ \ X^y_1(\tl\tau)=X^y_1(\tl\tau-)+\tl \eta; \\
&  X^x_2(\tl\tau)=X^x_2(\tl\tau-), \ \ \ \ \ \ \ \ X^y_2(\tl\tau)=X^y_2(\tl\tau-). \\
\end{split}
\Ees
where $X_1$ and $X_2$ is defined according to the practice in \eqref{e:Pi}. For notational simplicity, write
\Be  \label{e:hatXY}
\begin{split}
& \hat x_1:=X^x_1(\tl \tau-), \ \ \hat y_1:=X^y_1(\tl\tau-); \\
& \hat x_2:=X^x_2(\tl \tau-), \ \ \hat y_2:=X^y_2(\tl\tau-).
\end{split}
\Ee
Note that $\hat x_1, \hat x_2, \hat y_1, \hat y_2$ above are all random variables on
$(\tl \Omega_1, \tl {\mcl F_1}, \tl \PP_1)$.

Now consider the conditional probabilities $\mcl L(X^x_1(\tl\tau) |\hat x_1)$ and
$\mcl L(X^y_1(\tl\tau) |\hat y_1)$, it is easy to see that these two probabilities
respectively have the following densities:
$$p_{K}(z-\hat x_1) \ \ \ \ \ {\rm and} \ \  \ \ p_{K}(z-\hat y_1).$$
By Theorem 4.2 of \cite{KS-2001}, there exists a probability space $(\tl \Omega_2, \tl{\mcl F}_2, \tl \PP_2)$
such that for any pair $(u, v) \in \R^D \times \R^D$, there exists a pair of random variables
$$\xi(u, v,\tl \omega_2 )=(\xi^x(u, v,\tl \omega_2 ),\xi^y(u, v,\tl \omega_2 ))$$ satisfying the following properties:
\begin{itemize}
\item[(i)]  $\xi(u, v)$ is a maximal coupling of
$p_K(z-u)$ and $p_K(z-v)$,
\item[(ii)] the map  $\xi(u, v,\tl \omega_2): \R^D \times \R^D \times \tl \Omega_2 \rightarrow \R^D\times \R^D$
is measurable.
\end{itemize}

\noindent Take $$(\tl \Omega, \tl{\mcl F}, \tl \PP)=(\tl \Omega_1, \tl{\mcl F}_1, \tl \PP_1)
\times (\tl \Omega_2, \tl{\mcl F}_2, \tl \PP_2),$$
from the procedure above,
$\xi(\hat x_1,\hat y_1,\tl \omega_2)$ is a maximal coupling
for the conditional probability $\mcl L(X^x_1(\tl\tau)|\hat x_1)$ and
$\mcl L(X^y_1(\tl\tau)|\hat y_1)$.
By the property of the maximal coupling,
\Be  \label{e:MaxCouPro}
\tl \PP \left(\xi^x(\hat x_1,\hat y_1) \ne \xi^y(\hat x_1,\hat y_1)|\hat x_1,\hat y_1\right)=\frac 12 \int_{\R^D} |p_K(z-\hat x_1)-p_K(z-\hat y_1)| \dif z.
\Ee
On $(\tl \Omega, \tl{\mcl F}, \tl \PP)$,
for every $\tl \omega \in \tl \Omega$ define
\Be
\begin{split}
& S^{x,y}(t,\tl \omega)=(X^x(t,\tl \omega_1),X^y(t,\tl \omega_1)), \ \ \ \ 0 \le t<\tl\tau(\tl \omega_1), \\
& S^{x,y}(\tl\tau(\tl \omega_1),\tl \omega)=(\xi^x(\hat x_1,\hat y_1)+\hat x_2, \xi^y(\hat x_1,\hat y_1)+\hat y_2),
\end{split}
\Ee
where
$$\xi^x(\hat x_1,\hat y_1)=\xi^x\big(\hat x_1(\tl\omega_1), \hat y_1(\tl \omega_1), \tl \omega_2\big),$$
$$\xi^y(\hat x_1,\hat y_1)=\xi^y\big(\hat x_1(\tl \omega_1), \hat y_1(\tl \omega_1), \tl \omega_2\big),$$
and $\hat x_1, \hat y_1, \hat x_2, \hat y_2$ are defined in \eqref{e:hatXY}. (3) follows \eqref{e:MaxCouPro}
immediately.

It remains to show (2).
From the above construction, it is clear that $(S^x(t))_{0 \le t<\tl \tau}$ and $(X^x (t))_{0 \le t < \tau}$
have the same distributions.
Since $X^x(\tau)=X^x(\tau-)+\eta$ with $\eta$ being independent of
$(X^x(t))_{0 \le t<\tau}$, given $X^x(\tau-)$, $X^x(\tau)$ is independent of $(X^x(t))_{0 \le t<\tau}$ and has
probability densities $p_K(z-X_1^x(\tau-))$ in the subspace $\HH_1$ and $\delta_{X_2^x(\tau-)}$ in the subspace $\HH_2$.
On the other hand,
from the above coupling construction, $\mcl L(S^x(\tl \tau)|S^{x,y}(\tl \tau-))$ has probability densities $p_K(z-S_1^x(\tau-))$ in the subspace $\HH_1$ and $\delta_{S_2^x(\tau-)}$ in the subspace $\HH_2$. Integrating over $S^{y}(\tl \tau-)$,
we obtain that $\mcl L(S^x(\tl \tau)|S^{x}(\tl \tau-))$ has probability densities $p_K(z-S_1^x(\tau-))$ in the subspace $\HH_1$ and $\delta_{S_2^x(\tau-)}$ in the subspace $\HH_2$.
This further implies that given $S^{x}(\tl \tau-)$, $\tl \eta:=S^x(\tl \tau)-S^x(\tl \tau-)$ has
a probability density $p_K(z)$ in the subspace $\HH_1$ and $\delta_0$ in the subspace $\HH_2$, it is clearly independent of $S^{x}(\tl \tau-)$.
Hence, $(X^x (t))_{0 \le t \le \tau}$ and $(S^x (t))_{0 \le t \le \tl \tau}$ have the same distributions.
By the same argument as above, we get that $(X^y (t))_{0 \le t \le \tl \tau}$ and $(S^y (t))_{0 \le t \le \tl \tau}$ have the same distributions.
\end{proof}

\begin{lem} \label{l:CouLem2}
Let $X^x(t)$ and $X^y(t)$ be the solution to the equation \eqref{e:MaiSPDE} for any give $x \in \HH$ and $y \in \HH$ respectively.
Then, there exists a probability space $(\bar \Omega, \bar {\mcl F}, \bar \PP)$ on which
\begin{itemize}
\item[(1)] there exists a Markov process $S^{x,y}(t)=(S^x(t),S^y(t))$ such that
$S^x(t)$ and $S^y(t)$ have the same distributions as those of $X^x(t)$ and $X^y(t)$ respectively;
\item[(2)] there exists a stopping times sequences $(\tl \tau_k)_{k \ge 0}$ which has the same
distribution as $(\tau_k)_{k \ge 0}$;
\item[(3)] the following equality holds: for all $k \ge 1$,
$$\bar \PP \bigg(S_1^x(\tl \tau_k) \ne S_1^y(\tl \tau_k)\big|S_1^x(\tl \tau_k-), S_1^y(\tl \tau_k-)\bigg)=\frac{1}{2} \int_{\R^D} |p_K(z-z_1)-p_K(z-z_2)| d z,$$
where $z_1=S_1^x(\tl \tau_k-)$ and $z_2=S_1^y(\tl \tau_k-)$.
\end{itemize}
\end{lem}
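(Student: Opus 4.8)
The plan is to iterate the one-step construction of Lemma \ref{l:CouLem1} along the successive intervals $[\tl\tau_{k-1}, \tl\tau_k]$, using the Markov property of Eq. \eqref{e:MaiSPDE} to glue the pieces into a single coupling Markov process. The key feature of Lemma \ref{l:CouLem1} that makes this possible is that the coupling space $(\tl\Omega, \tl{\mcl F}, \tl\PP)$ and the coupling map do \emph{not} depend on the initial pair $(x,y)$, while, through the measurability clause (ii) inherited from Theorem 4.2 of \cite{KS-2001}, the coupled process depends measurably on $(x,y)$. Thus Lemma \ref{l:CouLem1} furnishes, for every pair of starting points, a transition mechanism that runs a coupling up to the first post-$T$ large jump $\tl\tau$ and satisfies (1)--(3), with the dependence on the starting pair being jointly measurable.

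First I would construct $(\bar\Omega, \bar{\mcl F}, \bar\PP)$ as the countable product $\prod_{k\ge 1}(\tl\Omega, \tl{\mcl F}, \tl\PP)$ of independent copies of the one-step space, the $k$-th factor governing the dynamics on the $k$-th interval. Set $\tl\tau_0=0$ and $S^{x,y}(0)=(x,y)$. Inductively, given the coupling up to time $\tl\tau_{k-1}$ with endpoint $(S^x(\tl\tau_{k-1}), S^y(\tl\tau_{k-1}))$, I apply the one-step construction on the $k$-th factor with initial pair equal to this endpoint; this produces a time increment $\tl\tau_k-\tl\tau_{k-1}$ distributed as $\tau$ and the coupled trajectory on $[\tl\tau_{k-1}, \tl\tau_k]$. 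Concatenating over $k$ yields $S^{x,y}(t)$ for all $t\ge 0$ together with the sequence $(\tl\tau_k)_{k\ge 0}$.

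Then I would verify the three claims. By the structure of \eqref{e:TauKInd}, the increments $\{\tl\tau_k-\tl\tau_{k-1}\}_{k\ge1}$ are independent copies of $\tau$, so $(\tl\tau_k)_{k\ge0}$ has the same law as $(\tau_k)_{k\ge0}$, which is (2). For (1), that each marginal $S^x$ (resp.\ $S^y$) has the law of $X^x$ (resp.\ $X^y$), I combine Lemma \ref{l:CouLem1}(2) on each interval with the Markov property of \eqref{e:MaiSPDE}: the one-step marginals agree with those of the original equation, and restarting at the endpoint of each interval reproduces exactly the Markov transition, so the concatenation reconstructs the full law of $X^x$ on $[0,\infty)$. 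Finally, (3) is immediate from Lemma \ref{l:CouLem1}(3) applied on the $k$-th factor with $z_1=S_1^x(\tl\tau_k-)$ and $z_2=S_1^y(\tl\tau_k-)$.

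The main obstacle will be the bookkeeping needed to show that the concatenated process is genuinely Markov and that the glued stopping times retain the independent-increment structure of \eqref{e:TauKInd}, despite each interval being initialized by a data-dependent restart. Concretely, one must check that conditioning on the entire past up to $\tl\tau_{k-1}$ reduces---via the measurable dependence of the one-step construction on its initial pair---to conditioning only on the endpoint $(S^x(\tl\tau_{k-1}), S^y(\tl\tau_{k-1}))$, so that the fresh randomness supplied by the $k$-th factor is independent of that past. This is precisely what legitimizes both the independence asserted in (2) and the conditional formula in (3), and it is the step where the "not depending on $x$ and $y$" clause of Lemma \ref{l:CouLem1} is indispensable.
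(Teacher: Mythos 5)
Your proposal is correct and follows essentially the same route as the paper: the paper also builds $(\bar\Omega,\bar{\mcl F},\bar\PP)$ as the infinite product of independent copies of the one-step space from Lemma \ref{l:CouLem1}, restarts the one-step coupling at the endpoint $(S^x(\tl\tau_{k-1}),S^y(\tl\tau_{k-1}))$ on each factor, and verifies (1) via Lemma \ref{l:CouLem1}(2) together with the Markov property, (2) via the i.i.d.\ increments $\tl\tau_k-\tl\tau_{k-1}$, and (3) directly from Lemma \ref{l:CouLem1}(3). Your explicit emphasis on the measurable dependence of the one-step coupling on the initial pair (inherited from Theorem 4.2 of \cite{KS-2001}) is the point the paper uses implicitly, so no gap remains.
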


\begin{proof}
We shall prove the lemma by recursively applying Lemma \ref{l:CouLem1}. For the further use,
recall the notations in Lemma \ref{l:CouLem1} and
denote $\tl \tau_0:=0$, $\Delta \tl \tau_1:=\tl \tau$, $\tl \tau_1:=\tl \tau_0+\Delta \tl \tau_1$  and $(\tl \Omega^{1}, \tl {\mcl F}^{1}, \tl \PP^{1}):=(\tl \Omega, \tl {\mcl F}, \tl \PP)$.

Now taking $S^x(\tl \tau_1)$ and $S^y(\tl \tau_1)$ as initial data, by Lemma \ref{l:CouLem1} we have a probability space $(\tl \Omega^{(2)}, \tl {\mcl F}^{(2)}, \tl \PP^{(2)})$,
a copy of $(\tl \Omega, \tl {\mcl F}, \tl \PP)$ in Lemma \ref{l:CouLem1}, on which there exists a stopping time $\Delta \tl \tau_2$ and a Markov process $(S^{S^x(\tl \tau_1),S^y(\tl \tau_1)}(t))_{0 \le t \le \Delta \tl \tau_2}$ with the properties (1)-(3).

Denote $(\tl \Omega^{2}, \tl {\mcl F}^{2}, \tl \PP^{2})=(\tl \Omega^{1}, \tl {\mcl F}^{1}, \tl \PP^{1})\times(\tl \Omega^{(2)}, \tl {\mcl F}^{(2)}, \tl \PP^{(2)})$ and $\tl \tau_2=\tl \tau_1+\Delta \tl \tau_2$, on this new space, for every $\tl \omega=(\tl \omega_1, \tl \omega_2)\in(\tl \Omega^{2}, \tl {\mcl F}^{2}, \tl \PP^{2})$ define
$$S^{x,y}(t, \tl \omega)=S^{x,y}(t, \tl \omega_1), \ \ \ \ t \in [0, \tl \tau_1(\tl \omega_1)],$$
$$S^{x,y}(t, \tl \omega)=S^{S^x(\tl \tau_1(\tl \omega_1)),S^y(\tl \tau_1(\tl \omega_1))}(t-\tl \tau_1(\omega_1), \tl \omega_2), \ \ \ \ t \in [\tl \tau_1(\tl \omega_1), \tl \tau_2(\tl \omega_2)].$$

It is clear that $\tl \tau_2-\tl \tau_1$ has the same distribution as
$\tau$ and is independent of $\tl \tau_1$ and that $\tl \tau_2$ has the same distribution as $\tau_2$. We further claim that $(S^x(t))_{0 \le t \le \tl \tau_2}$ and $(S^y(t))_{0 \le t \le \tl \tau_2}$ have the same distributions as those of
$(X^x(t))_{0 \le t \le \tau_2}$ and $(X^y(t))_{0 \le t \le \tau_2}$ respectively. Indeed, if $X^x(\tau_1)=S^x(\tl \tau_1)=\hat x$ and $X^y(\tau_1)=S^y(\tl \tau_1)=\hat y$ with $\hat x, \hat y \in \HH$, by (2) of Lemma \ref{l:CouLem1}, we have
$$\mcl L((S^x(t))_{\tl \tau_1 \le t \le \tl \tau_2}|S^x(\tl \tau_1)=\hat x)=\mcl L((X^x(t))_{\tau_1 \le t \le \tau_2}|X^x(\tau_1)=\hat x).$$
From Lemma \ref{l:CouLem1}, $\mcl L(S^x(\tl \tau_1))=\mcl L(X^x(\tau_1))$. Hence, by Markov property, we have
$$\mcl L((S^x(t))_{0 \le t \le \tl \tau_2})=\mcl L((X^x(t))_{0 \le t \le \tau_2}).$$
Similarly,
$$\mcl L((S^y(t))_{0 \le t \le \tl \tau_2})=\mcl L((X^y(t))_{0 \le t \le \tau_2}).$$
By (3) of Lemma \ref{l:CouLem1}, the third property with $k=2$ in the lemma clearly holds.

Applying the same argument as above inductively, we have:
\begin{itemize}
\item[(i)] a sequence of probability spaces
$(\tl \Omega^{1}, \tl {\mcl F}^{1}, \tl \PP^{1}), (\tl \Omega^{2}, \tl {\mcl F}^{2}, \tl \PP^{2}), ... ,
(\tl \Omega^{k}, \tl {\mcl F}^{k}, \tl \PP^{k}),...$ with
$(\tl \Omega^{k}, \tl {\mcl F}^{k}, \tl \PP^{k})=(\tl \Omega, \tl {\mcl F}, \tl \PP) \times ...\times (\tl \Omega, \tl {\mcl F}, \tl \PP)$
being the $k$-tuple direct product of $(\tl \Omega, \tl {\mcl F}, \tl \PP)$ and
$(\bar \Omega, \bar {\mcl F}, \bar \PP)=(\tl \Omega^{\infty}, \tl {\mcl F}^{\infty}, \tl \PP^{\infty})$;
\item[(ii)] a sequence of stopping times $(\tl \tau_k)_{k \ge 1}$ such that $\tl \tau_k$ is located in $(\tl \Omega^{k}, \tl {\mcl F}^{k}, \tl \PP^{k})$ for each $k \ge 1$ and $(\tl \tau_k-\tl \tau_{k-1})_{k \ge 1}$ is i.i.d.
with the same distribution as $\tau$;
\item[(iii)] a sequence of coupling Markov processes
$((S^{x,y}(t))_{0 \le t \le \tl \tau_k})_{k\ge 1}$ such that $(S^{x,y}(t))_{0 \le t \le \tl \tau_k}$
is located in $(\tl \Omega^{k}, \tl {\mcl F}^{k}, \tl \PP^{k})$ and $\mcl L((S^x(t))_{0 \le t \le \tl \tau_k})=\mcl L((X^x(t))_{0 \le t \le \tau_k})$
and $\mcl L((S^y(t))_{0 \le t \le \tl \tau_k})=\mcl L((X^y(t))_{0 \le t \le \tau_k})$ for all $k \ge 1$. Moreover, (3) in the lemma holds.
\end{itemize}
It is clear from (ii) that $\lim_{k \rightarrow \infty}\tau_k=\infty$ a.s. and
$\lim_{k \rightarrow \infty}\tl \tau_k=\infty$ a.s., this, together with (iii), immediately implies
(1) in the lemma.
\end{proof}

\subsection{Some estimates of the coupling chain $(S^{x,y}(\tl \tau_k))_{k \ge 0}$}
Recall that
$(S^{x,y}(\tl \tau_k))_{k \ge 0}$ is a Markov chain on the probability space
$(\bar \Omega, \bar {\mcl F}, \bar {\PP})$.
Note that $(\bar \Omega, \bar {\mcl F}, \bar {\PP})$ is not necessarily
the same as $(\Omega, {\mcl F}, {\PP})$ on which $(X^x(t))_{t \ge 0}$ and $(X^y(t))_{t \ge 0}$ is located.
Without loss of generality, we assume that
\Be \label{a:SpaAss}
(\Omega, {\mcl F}, {\PP})=(\bar \Omega, \bar {\mcl F}, \bar {\PP}).
\Ee
Otherwise we can introduce the product space $(\bar \Omega \times \Omega, \bar {\mcl F} \times \mcl F, \bar {\PP} \times \PP)$
and consider $(S^{x,y}(k))_{k \ge 0}$, $(X^x(t))_{t \ge 0}$ and $(X^y(t))_{t \ge 0}$ all together on this new space. However,
this will make the notations unnecessarily complicated, for instance, we have to always use $\bar \PP \times \PP$.

\begin{prop}   \label{p:SquKOrd}
Let  $\{\tl \tau_k\}_{k \ge 0}$ be the stopping times sequence in Lemma \ref{l:CouLem2} and let
\Be \label{e:Kappa}
\delta_k:=e^{-\lambda_{D+1} (\tl\tau_{k+1}-\tl\tau_k)}+\frac{\|F\|_{\rm Lip} e^{(\tl\tau_{k+1}-\tl\tau_k) \|F\|_{\rm Lip}}}{\lambda_{D+1}+\|F\|_{\rm Lip}}.
\Ee
For all $x, y \in \HH$,
if $\|S^x(\tl \tau_k)\|_{\HH}+\|S^y(\tl \tau_k)\|_{\HH} \le M$ with $M$ being the number in Assumption \ref{a:}, we have
\Bes
\PP\left\{\|S^x(\tl \tau_{k+1})-S^y(\tl \tau_{k+1})\|_{\HH}>\delta_k \|S^x(\tl \tau_{k})-S^y(\tl \tau_{k})\|_{\HH} \big |S^{x,y}(\tl \tau_k)\right\} \le {\beta_0}/{2},
\Ees
with $\beta_0$ being the constant in Assumption \ref{a:}. Furthermore, we have
\Bes 
\begin{split}
\PP\big\{\|S^x(\tl \tau_{k+1})-S^y(\tl \tau_{k+1})\|_{\HH} \ge \delta_k \|S^x(\tl \tau_{k})-S^y(\tl \tau_{k})\|_{\HH} \big |S^{x,y}(\tl \tau_{k})\big\}  \le \kappa \|S^x(\tl \tau_{k})-S^y(\tl \tau_{k})\|_{\HH}^{\beta_2}
\end{split}
\Ees
for all $k \ge 0$,
where $\kappa=\beta_1 e^{\beta_2 \|F\|_{\rm Lip} T}$ and
$\beta_1, \beta_2$ are the constants in Assumption \ref{a:}.
\end{prop}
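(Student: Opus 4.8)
The plan is to condition on $S^{x,y}(\tl\tau_k)$ and to analyse the single step to $\tl\tau_{k+1}$ through the dichotomy furnished by the maximal coupling at that jump. Write $\Delta_k:=\tl\tau_{k+1}-\tl\tau_k$ and let $A:=\{S_1^x(\tl\tau_{k+1})=S_1^y(\tl\tau_{k+1})\}$ be the event that the $\HH_1$-components are successfully coupled. On $[\tl\tau_k,\tl\tau_{k+1})$ both marginals are driven by the \emph{same} noise (Lemma \ref{l:CouLem2}), so $S^x(t)-S^y(t)$ solves the noise-free equation and the pathwise estimates of Lemma \ref{l:SolEst}(2) apply with initial data $S^x(\tl\tau_k),S^y(\tl\tau_k)$ over elapsed time $\Delta_k$. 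The decisive observation is that on $A$ the jump lives in $\HH_1$ and hence does not separate the $\HH_2$-components, so
\[
\|S^x(\tl\tau_{k+1})-S^y(\tl\tau_{k+1})\|_{\HH}=\|S_2^x(\tl\tau_{k+1}-)-S_2^y(\tl\tau_{k+1}-)\|_{\HH}\le \delta_k\,\|S^x(\tl\tau_k)-S^y(\tl\tau_k)\|_{\HH},
\]
the last inequality being exactly the second bound of Lemma \ref{l:SolEst}(2) with $t=\Delta_k$, whose contraction factor coincides with $\delta_k$ in \eqref{e:Kappa}. Thus the two ``non-contraction'' events are contained in the failure event $A^c$, and everything reduces to estimating $\PP(A^c\mid S^{x,y}(\tl\tau_k))$ in two ways.

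By property (3) of Lemma \ref{l:CouLem2}, conditionally on the pre-jump positions $z_1=S_1^x(\tl\tau_{k+1}-)$ and $z_2=S_1^y(\tl\tau_{k+1}-)$ one has
\[
\PP\big(A^c\mid S_1^x(\tl\tau_{k+1}-),S_1^y(\tl\tau_{k+1}-)\big)=\tfrac12\int_{\R^D}|p_K(z-z_1)-p_K(z-z_2)|\,\dif z,
\]
so $\PP(A^c\mid S^{x,y}(\tl\tau_k))$ is the conditional expectation of the right-hand side. For the first inequality I would invoke (A3): under the stated hypothesis the $\HH_1$-configuration entering the coupling lies in the $M$-ball, $|z_1|+|z_2|\le M$, so the integral is bounded pathwise by $\beta_0$ and taking expectation gives $\beta_0/2$ immediately — here the law of $\Delta_k$ plays no role.

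For the second inequality I would instead bound the integral by (A2), obtaining the pointwise estimate $\tfrac12\int_{\R^D}|p_K(z-z_1)-p_K(z-z_2)|\,\dif z\le\tfrac{\beta_1}{2}|z_1-z_2|^{\beta_2}$, and then control $|z_1-z_2|$, which is the $\HH_1$-part of the difference at $\tl\tau_{k+1}-$, by the first bound of Lemma \ref{l:SolEst}(2): $|z_1-z_2|\le e^{\Delta_k\|F\|_{\rm Lip}}\|S^x(\tl\tau_k)-S^y(\tl\tau_k)\|_{\HH}$ pathwise. Because $\Delta_k\ge T$ is unbounded, a pathwise bound on this factor is hopeless and one must average: since $\Delta_k$ is independent of $\mcl F_{\tl\tau_k}$ with the law of $\tau$ (density $\gamma_{K}e^{-\gamma_{K}(t-T)}$ on $t>T$), taking the conditional expectation reduces the whole estimate to the exponential moment $\E\,e^{\beta_2\|F\|_{\rm Lip}\Delta_k}=\frac{\gamma_{K}}{\gamma_{K}-\beta_2\|F\|_{\rm Lip}}\,e^{\beta_2\|F\|_{\rm Lip}T}$.

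The crux — and the only place the hypotheses bite sharply — is this last moment computation: assumption (A4), $\gamma_{K}\ge 2\beta_2\|F\|_{\rm Lip}$, guarantees both that the moment is finite and that $\frac{\gamma_{K}}{\gamma_{K}-\beta_2\|F\|_{\rm Lip}}\le 2$, and this factor $2$ cancels the $\tfrac12$ from the maximal coupling to produce exactly $\PP(A^c\mid S^{x,y}(\tl\tau_k))\le\beta_1 e^{\beta_2\|F\|_{\rm Lip}T}\|S^x(\tl\tau_k)-S^y(\tl\tau_k)\|_{\HH}^{\beta_2}=\kappa\|S^x(\tl\tau_k)-S^y(\tl\tau_k)\|_{\HH}^{\beta_2}$. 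The remaining items are bookkeeping that I would verify carefully: that conditioning on $S^{x,y}(\tl\tau_k)$ leaves $\Delta_k$ and the small-jump noise on $[\tl\tau_k,\tl\tau_{k+1})$ independent (the L\'evy--It\^o decomposition separates jumps of size $\ge K$ from the rest, so the difference process carries no noise while its running time stays exponential), the measurability of the maximal coupling supplied by Lemma \ref{l:CouLem2}, and the degenerate case $S^x(\tl\tau_k)=S^y(\tl\tau_k)$, where the estimates are read as strict non-contraction so as to remain consistent.
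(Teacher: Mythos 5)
Your proposal is correct and follows essentially the same route as the paper: the same reduction of both non-contraction events to the failure of the $\HH_1$-coupling (the $\HH_2$-difference being pathwise contracted by exactly $\delta_k$ via Lemma \ref{l:SolEst}(2)), the same use of property (3) of the coupling lemma together with (A2)/(A3), and the same exponential-moment computation for $\tl\tau_{k+1}-\tl\tau_k$ where (A4) turns $\frac{\gamma_K}{\gamma_K-\beta_2\|F\|_{\rm Lip}}$ into the factor $2$ cancelling the $\tfrac12$ from the maximal coupling. The only difference is cosmetic: the paper reduces to $k=0$ by time-homogeneity and leaves the moment computation and the (A3)-based first inequality implicit, whereas you carry them out explicitly.
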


\begin{proof}
The proofs of the both inequalities are similar, we only show the second one, which is more difficult than the first.
Since $\{S^{x,y}(\tl \tau_k)\}_{k \ge 0}$ is a time-homogeneous Markov chain,
it suffices to show the inequality for $k=0$, i.e.
\Be \label{e:Squ1Ord}
\PP\left(\|S^x(\tl \tau_1)-S^y(\tl \tau_1)\|_{\HH} \ge \delta_0 \|x-y\|_{\HH}\right) \le \kappa \|x-y\|_{\HH}^{\beta_2}.
\Ee
By the construction of the coupling process $\{S^{x,y}(t)\}_{0 \le t \le \tl \tau_1}$ in Lemma \ref{l:CouLem1}, $S^{x,y}(\tl \tau_1)$ have
\Be \label{e:BarxNeqBary}
\begin{split}
& \ \ \PP \left(\|S^x(\tl \tau_1)-S^y(\tl \tau_1)\|_{\HH}>\delta_0 \|x-y\|_{\HH}\right) \\
& \le \PP \left(S_1^x(\tl \tau_1) \ne S_1^y(\tl \tau_1)\right)+\PP \left(S_1^x(\tl \tau_1)=S_1^y(\tl \tau_1), \ \|S_2^x(\tl \tau_1)-S_2^y(\tl \tau_1)\|_{\HH}>\delta_0 \|x-y\|_{\HH}\right).
\end{split}
\Ee

\noindent On the one hand, it follows from (3) of Lemma \ref{l:CouLem1} that
\Bes
\begin{split}
 \PP \left(S_1^x(\tl \tau_1) \ne S_1^y(\tl \tau_1)\right)& =\E \left[\PP\big(S_1^x(\tl \tau_1) \ne S_1^y(\tl \tau_1) \big |(S^x(\tl \tau_1-), S^y(\tl \tau_1-))\big)\right] \\
 & =  \frac{1}{2} \E \int_{\R^D} |p_K(z-S_1^x(\tl \tau_1-))-p_K(z-S_1^y(\tl \tau_1-))| d z \\
&  \le  \frac{\beta_1}{2}\E \left|S_1^x(\tl \tau_1-)-S_1^y(\tl \tau_1-)\right|^{\beta_2},
\end{split}
\Ees
where  the inequality is by (A2) of Assumption \ref{a:}.

This, together with (2) of Lemma \ref{l:SolEst} and (A4) of Assumption \ref{a:}, imply
\Be
\begin{split}
\PP \left(S_1^x(\tl \tau_1) \ne S_1^y(\tl \tau_1)\right)  &\le \frac{\beta_1}{2} \E\left[e^{\beta_2 \|F\|_{\rm Lip} \tl \tau_1}\right] \|x-y\|_{\HH}^{\beta_2}\\
& \le \kappa \|x-y\|_{\HH}^{\beta_2}.
\end{split}
\Ee
On the other hand, it follows from (2) of Lemma \ref{l:SolEst} that
$$\|S^x_2(\tl \tau_1)-S^y_2(\tl \tau_1)\|_{\HH} \le \delta_0 \|x-y\|_{\HH} \ \ \ a.s., $$
therefore,
\Be \label{e:Xx=Xy}
\PP \left(\|S^x_2(\tl \tau_1-)-S^y_2(\tl \tau_1-)\|_{\HH}>\delta_0 \|x-y\|_{\HH}\right)=0.
\Ee
Collecting \eqref{e:BarxNeqBary}-\eqref{e:Xx=Xy}, we immediately get the desired inequality.
\end{proof}
\ \ \

\section{Proof of main theorem}
For notational simplicity, we shall simply write the coupling chain as
$$S^{x,y}(k)=S^{x,y}(\tl \tau_k), \ \ \ \ \ \ k \ge 0,$$
and drop the superscript $x,y$ whenever no confusions arise.
Let us briefly give the strategy of the proof as below:
\begin{itemize}
\item [(i)] We first estimate
\Bes
|\E[f(X^x(\tau_k))]-\E[f(X^y(\tau_k))]|
\Ees
for any $f \in L_b(\HH)$ and any $x, y \in \HH$, and then compare the $X(t)$ and
$X(\tau_k)$ to get the exponential mixing of $X(t)$.
\item [(ii)] By the coupling we have constructed in previous section, we have
\Bes
\E[f(X^x(\tau_k))]-\E[f(X^y(\tau_k))]=\E[f(S^x(k))]-\E[f(S^y(k))].
\Ees
\item [(iii)] To estimate $|\E[f(S^x(k))]-\E[f(S^y(k))]|$ for sufficiently large $k$, we need to introduce some stopping times and estimate them.
Roughly speaking, these stopping times can be simplified as
\Bes
\begin{split}
& \tl \sigma=\inf\{k>0; \| S^x(k)\|_{\HH}+\| S^y(k)\|_{\HH} \le M\}, \\
& \hat \sigma=\inf\left\{k>0; \| S^x(k)- S^y(k)\|_{\HH} \ge \frac{\tl \tau_k}{\lambda^k_{D+1}}\right\},
\end{split}
\Ees
the $\hat \sigma$ is exactly defined in \eqref{d:SigXY}, but the above
definition captures the essential part of \eqref{d:SigXY}.
We show that $\E\exp\left(\theta \tl \sigma\right)<\infty$ for some $\theta>0$ and $\PP(\hat \sigma=\infty)>0$.
The two relations roughly mean that the system $( S(k))_{k \ge 0}$ enters the $M$-radius ball
exponentially frequently, for some sample paths (with positive probability) in the ball, $\| S^x(k)-S^y(k)\|_{\HH}$ converges to zero exponentially fast
as long as $\lambda_{D+1}$ is sufficiently large.
\end{itemize}
\subsection{Some estimates of stopping times of the coupling chain $(S^{x,y}(k))_{k \ge 0}$}
In this subsection, we shall construct stopping times of the coupling chain $(S^{x,y}(k))_{k \ge 0}$
and give some auxiliary lemmas for proving the main theorem. The proofs of these auxiliary lemmas can
be found in \cite{Xu14}.

Given $M, d>0$, $x,y\in\HH$, define the stopping times
\Be \label{d:SigM}
\tl \sigma(x,y,M):=\inf \left\{k>0; \| S^x(k)\|_{\HH}+\| S^y(k)\|_{\HH} \le M\right\},
\Ee
\Be \label{d:SigD}
\sigma(x,y,d):=\inf \left\{k>0; \| S^x(k)- S^y(k)\|_{\HH} \le d\right\},
\Ee
we set $\tl \sigma=\tl \sigma(x,y,M)$, $\sigma=\sigma(x,y,d)$ in shorthand if no confusions arise. Let us prove the following
two theorems:

\begin{thm} \label{l:TauPro}
For all $p \in (0, \alpha)$, as $T>T_0:=\frac{(p-1)\log 3}{p \lambda_1} \vee 0$ with $T$ being number defined in \eqref{stopping time}, there
exist positive constants $M, \tl \vartheta, C$ depending on $p,\lambda_1, \|F\|_0, T$ so that
\Bes
\E_{(x,y)} [e^{\tl \vartheta \tl \sigma}]<C(1+\|x\|_{\HH}^p+\|y\|_{\HH}^p)
\Ees
for all $x, y \in \HH$.
\end{thm}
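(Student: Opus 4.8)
The plan is to treat $V(x,y):=\|x\|_\HH^p+\|y\|_\HH^p$ as a Lyapunov function for the time‑homogeneous coupling chain $Z_k:=S^{x,y}(\tl\tau_k)$, and to extract the exponential moment of $\tl\sigma$ from a one‑step geometric drift inequality with contraction factor strictly below $1$. The starting observation is that, by Lemma \ref{l:CouLem2}, the transition of $Z_k$ from a point $(x',y')$ over one inter‑jump interval has the law of $\big(X^{x'}(\tau),X^{y'}(\tau)\big)$, where $\tau$ is the first big‑jump time of \eqref{stopping time}, having density $\gamma_K e^{-\gamma_K(t-T)}\mathbf 1_{\{t>T\}}$ and independent of the pre‑$\tl\tau_k$ history.

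For the drift I would not invoke Lemma \ref{l:SolEst}(1) at a fixed time but its pathwise precursor: from \eqref{e:MilSol} one has, for every path, $\|X^{x'}(t)\|_\HH\le e^{-\lambda_1 t}\|x'\|_\HH+\|F\|_0/\lambda_1+|Z_A(t)|$ with $Z_A(t)=\int_0^t e^{A(t-s)}\dif L_s$, hence $\|X^{x'}(t)\|_\HH^p\le (3^{p-1}\vee1)\big(e^{-\lambda_1 pt}\|x'\|_\HH^p+(\|F\|_0/\lambda_1)^p+|Z_A(t)|^p\big)$. Evaluating at $t=\tau$ and taking expectations, the coefficient of $\|x'\|_\HH^p$ becomes $(3^{p-1}\vee1)\,\E[e^{-\lambda_1 p\tau}]$, and the explicit density of $\tau$ gives $\E[e^{-\lambda_1 p\tau}]=\frac{\gamma_K e^{-\lambda_1 pT}}{\lambda_1 p+\gamma_K}$. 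Summing the two coordinates therefore yields
\[
\E\big[V(Z_{k+1})\,\big|\,Z_k\big]\le \rho\,V(Z_k)+b,\qquad \rho:=(3^{p-1}\vee1)\,\frac{\gamma_K e^{-\lambda_1 pT}}{\lambda_1 p+\gamma_K}.
\]
Since $\tfrac{\gamma_K}{\lambda_1 p+\gamma_K}<1$ we have $\rho<(3^{p-1}\vee1)e^{-\lambda_1 pT}$, which is $<1$ precisely when $T>T_0=\frac{(p-1)\log 3}{p\lambda_1}\vee0$ (for $p<1$ the prefactor is $1$ and $T_0=0$), matching the hypothesis exactly.

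The delicate point — and the step I expect to be the main obstacle — is the finiteness and the $(x',y')$‑uniformity of the constant $b$, which forces us to bound $\E|Z_A(\tau)|^p$ at the \emph{random} jump time $\tau$, where (A1) does not apply directly. I would handle this by splitting $\tl L$ into its independent small‑jump part (jumps of size $<K$) and its big‑jump compound Poisson part; then $\tau-T$ is exponential and measurable with respect to the big jumps after time $T$ alone, hence independent of the small‑jump convolution and of the initial data. Writing $Z_A(\tau)=\int_0^{\tau-}e^{A(\tau-s)}\dif\tl L_s+\tl\eta$ with $\tl\eta$ the big jump at $\tau$, the jump contributes $\E|\tl\eta|^p=\int_{\R^D}|y|^p\,\nu_K(\dif y)<\infty$, finite exactly because $p<\alpha$, while the convolution term is controlled uniformly via (A1) together with the independence just described; both are absorbed into $b$, which then depends only on $p,\lambda_1,\|F\|_0,K,T$.

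With the drift in hand I would fix $M$ large — this is where the constant $M$ left unspecified in Assumption \ref{a:} is finally chosen (Proposition \ref{p:StaCon} guarantees (A3) still holds for it) — so that, setting $M':=(M/2)^p$, on the complement of $\{V\le M'\}$ the bound $b\le (b/M')V(Z_k)$ upgrades the drift to a strict contraction $\E[V(Z_{k+1})\mid Z_k]\le\rho' V(Z_k)$ with $\rho':=\rho+b/M'<1$. Iterating this contraction along $\{V(Z_j)>M',\,1\le j\le k\}$ and applying Markov's inequality gives the geometric tail $\PP(\sigma'>k)\le C(\rho')^{k}\big(1+V(x,y)\big)$ for $\sigma':=\inf\{k\ge1:V(Z_k)\le M'\}$; multiplying by $e^{\tl\vartheta k}$ and summing for any $\tl\vartheta\in\big(0,\log(1/\rho')\big)$ yields $\E_{(x,y)}[e^{\tl\vartheta\sigma'}]\le C\big(1+\|x\|_\HH^p+\|y\|_\HH^p\big)$. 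Finally, $V(z)\le M'$ forces each norm to be $\le M/2$ and hence $\|S^x\|_\HH+\|S^y\|_\HH\le M$, so $\{V\le M'\}\subseteq\{\|S^x\|_\HH+\|S^y\|_\HH\le M\}$ and therefore $\tl\sigma\le\sigma'$; the exponential moment bound transfers, completing the proof.
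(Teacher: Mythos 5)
Your argument is correct, and it is essentially the proof that the paper delegates to Theorem 4.1 of \cite{Xu14}: a Foster--Lyapunov geometric drift for $V(x,y)=\|x\|_{\HH}^p+\|y\|_{\HH}^p$ along the chain $S^{x,y}(\tl\tau_k)$, in which the contraction factor $(3^{p-1}\vee 1)\,\E[e^{-\lambda_1 p\tau}]$ is forced below $1$ by exactly the hypothesis $T>T_0$ (and this is also the point where the constant $M$ from Assumption \ref{a:} gets fixed, as the paper's remark following that assumption announces). Your handling of $\E|Z_A(\tau)|^p$ at the random jump time --- splitting $L$ into the small-jump part, which is independent of $\tau$, and the big-jump compound Poisson part, whose $p$-th moments are finite for $p<\alpha$ --- is the right way to make (A1) applicable there, so I see no gap.
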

\begin{proof}
See Theorem 4.1 in \cite{Xu14}.
\end{proof}

\begin{thm} \label{t:SigDEst}
As $D$ is sufficiently large, there exists some constant $\vartheta>0$ such that for all $p \in (0, \alpha)$, $d>0$ and $x, y \in \HH$,
\Be \label{e:SigEst}
\E_{(x,y)} [e^{\vartheta \sigma}]\leq C\bg(1+\|x\|_{\HH}^p+\|y\|_{\HH}^p\bg)
\Ee
where $C$ depends on $p,  \lambda_1, \lambda_{D+1}, \|F\|_{0}, \|F\|_{\rm Lip}, \vartheta, d, K$.
\end{thm}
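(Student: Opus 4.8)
The plan is to combine the recurrence estimate of Theorem~\ref{l:TauPro} with the one-step contraction estimates of Proposition~\ref{p:SquKOrd} through a regeneration scheme, so that the hitting time $\sigma$ of the set $\{\|S^x(k)-S^y(k)\|_\HH\le d\}$ is dominated by a geometric number of ``excursions'', each of exponentially bounded length. Concretely, I would first reduce the claim to two facts: (i) the chain $(S^{x,y}(k))_{k\ge 0}$ returns to the ball $B_M:=\{(u,v):\|u\|_\HH+\|v\|_\HH\le M\}$ with exponentially bounded return times, which follows from Theorem~\ref{l:TauPro} at the initial point and, via the strong Markov property, from the uniform bound $\|u\|_\HH^p+\|v\|_\HH^p\le 2M^p$ available at every subsequent in-ball start; and (ii) there exist a uniform $q>0$ and an integer $N$ so that from every point of $B_M$ the chain reaches $\{\|S^x-S^y\|_\HH\le d\}$ within $N$ steps with probability at least $q$. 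Granting (i) and (ii), a standard geometric-trials argument (cf.\ \cite{Xu14}) yields $\E_{(x,y)}[e^{\vartheta\sigma}]\le C(1+\|x\|_\HH^p+\|y\|_\HH^p)$ for $\vartheta>0$ sufficiently small, the factor $\|x\|_\HH^p+\|y\|_\HH^p$ entering only through the first entry time.

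The core is the uniform lower bound (ii). Here I would exploit the two-scale structure already visible in Lemma~\ref{l:SolEst}(2): the high-frequency part contracts deterministically, $\|S^x_2(k{+}1)-S^y_2(k{+}1)\|_\HH\le\delta_k\|S^x(k)-S^y(k)\|_\HH$, while the low-frequency part couples, $S^x_1=S^y_1$, with the conditional probability supplied by Proposition~\ref{p:SquKOrd}. Taking $D$ large makes $\lambda_{D+1}$ large; since $\tl\tau_{k+1}-\tl\tau_k$ is distributed as $T+\mathrm{Exp}(\gamma_K)$, a direct computation gives $\E[\delta_k]\to 0$ as $D\to\infty$ (both terms in \eqref{e:Kappa} carry a vanishing prefactor, the second being finite by (A4)), so for $D$ sufficiently large $\bar\delta:=\E[\delta_k]<1$ can be made arbitrarily small. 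This is exactly where the hypothesis ``$D$ sufficiently large'' is consumed. On the event that contraction succeeds at each of $N$ consecutive steps the distance is multiplied by $\prod_j\delta_j$, and starting from the in-ball bound $\|S^x(0)-S^y(0)\|_\HH\le M$ it falls below $d$ after $N=O(\log(M/d))$ successful steps.

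The main obstacle is coordinating ``staying in the ball'' with ``contracting''. The first inequality of Proposition~\ref{p:SquKOrd}, the one with the clean bound $\beta_0/2$, requires both components to lie in $B_M$ at each step, yet contraction of the distance $\|S^x-S^y\|_\HH$ does not prevent the individual norms from escaping $B_M$. I would resolve this by running the first-inequality contraction only until the distance drops below a threshold $\rho$ with $\kappa\rho^{\beta_2}$ small, and from then on invoking the \emph{unconditional} second inequality of Proposition~\ref{p:SquKOrd}, whose per-step failure probability $\kappa\|S^x(k)-S^y(k)\|_\HH^{\beta_2}$ is automatically tiny and needs no ball hypothesis; if the chain leaves $B_M$ before the distance is tamed, I abort the trial, re-enter $B_M$ via Theorem~\ref{l:TauPro}, and regenerate. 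Making $q>0$ quantitative, by pinning down $\rho$, $N$, and the number of trials and checking that each trial has exponentially bounded duration, is the technical heart of the argument and the step I expect to be most delicate.
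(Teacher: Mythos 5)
The paper offers no proof of this theorem at all --- it simply cites Theorem 4.2 of \cite{Xu14} --- so the comparison is against that reference and against the strategy sketched at the start of Section 5. Your architecture (recurrence to $B_M$ via Theorem \ref{l:TauPro}, a uniformly positive success probability per excursion from $B_M$, geometric trials glued by the strong Markov property, with the factor $1+\|x\|_{\HH}^p+\|y\|_{\HH}^p$ entering only through the first entry time) is exactly the intended one. The gap is your step (ii), which you flag but do not close, and whose proposed mechanism does not work as literally stated. Repeated use of the first inequality of Proposition \ref{p:SquKOrd} requires $\|S^x(k)\|_{\HH}+\|S^y(k)\|_{\HH}\le M$ at \emph{every} one of the $N$ contraction steps, whereas Theorem \ref{l:TauPro} gives only exponential moments of the \emph{return} time to $B_M$, not a lower bound on the probability of \emph{remaining} in $B_M$ for $N$ consecutive steps; and your abort--and--regenerate device discards all contraction progress at each abort, so the trials argument closes only if one proves a lower bound $q>0$, uniform over starting points in $B_M$, for the event that the chain stays in $B_M$ for $N$ steps, all $N$ couplings succeed, and $\prod_{j<N}\delta_j\le d/M$. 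That bound is provable (Chebyshev via Lemma \ref{l:SolEst}(1) for the in-ball probability, after restricting to the event $\{\tl\tau_{j+1}-\tl\tau_j\le T+1 \ \forall j<N\}$, which has probability $(1-e^{-\gamma_K})^N$ and on which each $\delta_j$ is bounded by a deterministic $\bar\delta(D)\to 0$), but it is the quantitative heart of the theorem and it is missing from your write-up.

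Two further remarks. First, your assertion that $\E[\delta_k]\to 0$ as $D\to\infty$ needs $\E[e^{\|F\|_{\rm Lip}\tau}]<\infty$, i.e.\ $\gamma_K>\|F\|_{\rm Lip}$, which (A4) guarantees only when $\beta_2\ge 1/2$; conditioning on bounded inter-jump gaps as above avoids taking expectations of $\delta_k$ altogether. Second, your two-phase idea can be streamlined so that the ball hypothesis is consumed exactly once, which removes the ``staying in the ball'' difficulty entirely: a single application of the first inequality at step $0$ (where $\|x\|_{\HH}+\|y\|_{\HH}\le M$ holds by construction of the trial), intersected with $\{\tl\tau_1\le T+1\}$, already drives the distance below your threshold $\rho<(1/(4\kappa))^{1/\beta_2}$ with probability at least $(1-\beta_0/2)(1-e^{-\gamma_K})$ once $D$ is large enough that $\bar\delta(D)M\le\rho$ (a choice independent of $d$, consistent with the quantifier order in the statement); thereafter the unconditional second inequality, in the packaged form of Lemma \ref{l:SigInf}, finishes within $O(\log(\rho/d))$ further steps with probability at least $1/4$, with no in-ball requirement. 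With that modification, and the bookkeeping for the exponential moment of the total trial length done as in the paper's estimate of $\E_{(x,y)}[e^{\gamma\bar\sigma_i}]$, your outline becomes a complete proof.
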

\begin{proof}
See Theorem 4.2 in \cite{Xu14}.
\end{proof}

\ \ \ \ \ \ \

Define
\Be \label{d:SigXY}
\hat \sigma (x,y):=\inf\{k \ge 1; \|S^x(k)-S^y(k)\|_{\HH}>(\delta_0 \dots \delta_{k-1}) \|x-y\|_{\HH}\},
\Ee
where $\delta_j$ ($j=0, ..., k-1$) are defined in Proposition \ref{p:SquKOrd}, we shall often
write $\hat \sigma=\hat \sigma(x,y)$ in shorthand.

\begin{lem} \label{l:SigInf}
If $\|x-y\|_{\HH} \le d$ with $0<d<\left(\frac 1{4 \kappa}\right)^{1/\beta_2}$ and $\kappa$ defined in
Proposition \ref{p:SquKOrd}, as $D$ is large enough, we have
\begin{enumerate}
\item $\PP_{(x,y)} (\hat \sigma=\infty)>1/2.$
\item There exists some $\epsilon>0$ (possibly small) depending on
$d,\lambda_1, \lambda_{D+1}, \|F\|_0, \|F\|_{\rm Lip}, \alpha, K, \epsilon$ such that
$$\E_{(x,y)} [e^{\epsilon \hat \sigma} 1_{\{\hat \sigma<\infty\}}] \le C,$$
where $C$ depends on $d, \lambda_1, \lambda_{D+1},\|F\|_0, \|F\|_{\rm Lip}, \alpha, K, \epsilon$.
\end{enumerate}
\end{lem}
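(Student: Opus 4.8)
The plan is to reduce the whole statement to the one-step estimate of Proposition \ref{p:SquKOrd} followed by a geometric summation. First I would introduce the abbreviations $Z_k:=\|S^x(k)-S^y(k)\|_{\HH}$ (so $Z_0=\|x-y\|_{\HH}$) and $D_k:=\delta_0\cdots\delta_{k-1}$ with $D_0:=1$, and work with the filtration $\mcl F_k$ generated by $(S^{x,y}(\tl\tau_j),\tl\tau_j)_{0\le j\le k}$. Rewriting \eqref{d:SigXY} as $\{\hat\sigma>k\}=\{Z_j\le D_jZ_0 \text{ for }1\le j\le k\}$, the crucial elementary observation is the inclusion $\{\hat\sigma=k+1\}\subseteq\{\hat\sigma>k\}\cap\{Z_{k+1}\ge\delta_kZ_k\}$: on $\{\hat\sigma>k\}$ one has $Z_k\le D_kZ_0$, hence $D_{k+1}Z_0=\delta_kD_kZ_0\ge\delta_kZ_k$, so the failure $Z_{k+1}>D_{k+1}Z_0$ forces $Z_{k+1}\ge\delta_kZ_k$.

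Next I would feed this into Proposition \ref{p:SquKOrd}. Since $1_{\{\hat\sigma>k\}}$ and $D_k$ are $\mcl F_k$-measurable, conditioning on $\mcl F_k$ (using the Markov property of the coupling chain) and then inserting $Z_k\le D_kZ_0$ on $\{\hat\sigma>k\}$ gives
\Bes
\PP_{(x,y)}(\hat\sigma=k+1)\le\kappa\,Z_0^{\beta_2}\,\E\big[1_{\{\hat\sigma>k\}}D_k^{\beta_2}\big]\le\kappa\,\|x-y\|_{\HH}^{\beta_2}\,\E\big[D_k^{\beta_2}\big].
\Ees
Because the gaps $\tl\tau_{j+1}-\tl\tau_j$ are i.i.d.\ by \eqref{e:TauKInd}, the factors $\delta_j$ from \eqref{e:Kappa} are i.i.d.\ as well, whence $\E[D_k^{\beta_2}]=q^k$ with $q:=\E[\delta_0^{\beta_2}]$. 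At this point the entire problem is governed by the single scalar $q$ together with the hypothesis $\kappa\,\|x-y\|_{\HH}^{\beta_2}\le\kappa d^{\beta_2}<1/4$.

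The hard part, and the only place the size of $D$ enters, is showing that $q=\E[\delta_0^{\beta_2}]$ can be made small as $\lambda_{D+1}\to\infty$. Writing $\Delta:=\tl\tau_1-\tl\tau_0$, so that $\Delta-T$ is exponential with rate $\gamma_K$, I would split $\delta_0$ into its two summands and use $(a+b)^{\beta_2}\le C_{\beta_2}(a^{\beta_2}+b^{\beta_2})$: the term $e^{-\lambda_{D+1}\Delta}$ contributes at most $e^{-\beta_2\lambda_{D+1}T}\to0$, while the term $\tfrac{\|F\|_{\rm Lip}}{\lambda_{D+1}+\|F\|_{\rm Lip}}e^{\|F\|_{\rm Lip}\Delta}$ contributes a multiple of $(\lambda_{D+1}+\|F\|_{\rm Lip})^{-\beta_2}\E[e^{\beta_2\|F\|_{\rm Lip}\Delta}]$. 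The delicate point is the finiteness of this exponential moment, and it is exactly here that (A4) is used: since $\gamma_K\ge 2\beta_2\|F\|_{\rm Lip}>\beta_2\|F\|_{\rm Lip}$ one gets $\E[e^{\beta_2\|F\|_{\rm Lip}\Delta}]=e^{\beta_2\|F\|_{\rm Lip}T}\gamma_K/(\gamma_K-\beta_2\|F\|_{\rm Lip})<\infty$, and the prefactor $(\lambda_{D+1}+\|F\|_{\rm Lip})^{-\beta_2}$ then drives this term to $0$ too. Hence $q\to0$ as $D\to\infty$, so I may fix $D$ large enough that $q<1/2$.

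Finally both claims follow by summing the geometric bound. For (1),
\Bes
\PP_{(x,y)}(\hat\sigma<\infty)=\sum_{k\ge0}\PP_{(x,y)}(\hat\sigma=k+1)\le\kappa d^{\beta_2}\sum_{k\ge0}q^k=\frac{\kappa d^{\beta_2}}{1-q}<\frac{1}{4(1-q)}<\frac12,
\Ees
using $q<1/2$, which yields $\PP_{(x,y)}(\hat\sigma=\infty)>1/2$. For (2) I would insert the weight $e^{\epsilon(k+1)}$ and pick $\epsilon\in(0,-\log q)$ so that $e^\epsilon q<1$, giving
\Bes
\E_{(x,y)}\big[e^{\epsilon\hat\sigma}1_{\{\hat\sigma<\infty\}}\big]=\sum_{k\ge0}e^{\epsilon(k+1)}\PP_{(x,y)}(\hat\sigma=k+1)\le\frac{\kappa d^{\beta_2}e^\epsilon}{1-e^\epsilon q}=:C<\infty,
\Ees
which is the asserted estimate, with $C$ depending on the listed parameters through $\kappa$, $q$, $d$ and $\epsilon$. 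I expect the estimate of $q$ in the third step to be the main obstacle, as it is where the spectral gap $\lambda_{D+1}$ must dominate the random growth factor $e^{\beta_2\|F\|_{\rm Lip}\Delta}$ and where (A4) keeps the relevant exponential moment finite.
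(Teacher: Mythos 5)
Your argument is correct and is essentially the standard one-step-contraction-plus-geometric-summation proof that the paper itself does not reproduce but delegates to Lemma 5.1 of \cite{Xu14}: the reduction to the second inequality of Proposition \ref{p:SquKOrd} via the inclusion $\{\hat\sigma=k+1\}\subseteq\{\hat\sigma>k\}\cap\{Z_{k+1}\ge\delta_k Z_k\}$, the i.i.d.\ factorization $\E[D_k^{\beta_2}]=q^k$ coming from \eqref{e:TauKInd}, and the use of (A4) to keep $\E[e^{\beta_2\|F\|_{\rm Lip}\Delta}]$ finite while $\lambda_{D+1}\to\infty$ drives $q\to 0$ are exactly the ingredients required. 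The only point worth flagging is cosmetic: for $\beta_2>1$ the constant $C_{\beta_2}=2^{\beta_2-1}$ from the elementary inequality must be carried into the bound on $q$, which changes nothing in the conclusion.
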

\begin{proof}
See Lemma 5.1 in \cite{Xu14}.
\end{proof}

Define
\Be \label{d:SigDag}
\sigma^{\dag}(x,y,d):=\sigma+\hat \sigma(S^{x,y}(\sigma)),
\Ee
where $\sigma=\sigma(x,y,d)$ defined by \eqref{d:SigD}. Further define
\Be \label{d:BarSig}
\bar \sigma(x,y,d,M):=\sigma^\dag+\tl \sigma(S^{x,y}(\sigma^\dag),M),
\Ee
where $\sigma^\dag=\sigma^\dag(x,y,d)$ and $\tl \sigma$ is defined in \eqref{d:SigM}.

The motivation for defining $\bar \sigma$ is the following: we only know
$\|S^x(\sigma^\dag)-S^y(\sigma^\dag)\|_{\HH} \le d$, but have no idea about
the bound of $\|S^x(\sigma^\dag)\|_{\HH}+\|S^y(\sigma^\dag)\|_{\HH}$.
This bound is very important for iterating a stopping time argument as in Step 1 of
the proof of Theorem \ref{t:SigDEst}. To this aim, we introduce \eqref{d:BarSig}
and thus have
\Be \label{e:SBarSigM}
\|S^{x,y}(\bar \sigma)\|_{\HH} \le M \ \ \ \forall \ x,y \in \HH.
\Ee
\begin{lem} \label{l:BarSigEst}
Let $0<d<\left(\frac 1{4 \kappa}\right)^{1/\beta_2}$ and $p \in (0, \alpha)$. There exist some $\gamma, C>0$
depending on $d,\lambda_1, \lambda_{D+1}, \|F\|_0, \|F\|_1, p, M, K$ such that
\Bes
\E_{(x,y)}[e^{\gamma \bar \sigma (x,y,d,M)} 1_{\{\bar \sigma(x,y,d,M)<\infty\}}] \le C(1+\|x\|_{\HH}^p+\|y\|_{\HH}^p).
\Ees
\end{lem}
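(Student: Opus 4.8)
The plan is to exploit the additive structure $\bar\sigma = \sigma + \hat\sigma(S^{x,y}(\sigma)) + \tl\sigma(S^{x,y}(\sigma^\dag),M)$ coming from \eqref{d:SigDag} and \eqref{d:BarSig}, and to peel off the three segments one at a time by the strong Markov property of the coupling chain $(S^{x,y}(k))_{k\ge 0}$, invoking a norm-carrying exponential moment bound for each. Throughout I will fix $\gamma>0$ smaller than $\min\{\tl\vartheta,\vartheta,\epsilon\}$ (the rates furnished by Theorem \ref{l:TauPro}, Theorem \ref{t:SigDEst} and Lemma \ref{l:SigInf}) and, in addition, small enough relative to the contraction rate introduced below.

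First I would peel off the last segment. Applying the strong Markov property at $\sigma^\dag$, using $\bar\sigma-\sigma^\dag=\tl\sigma(S^{x,y}(\sigma^\dag),M)$ together with Theorem \ref{l:TauPro} (applicable since $T>T_0$), the inner conditional expectation is bounded by $C(1+\|S^x(\sigma^\dag)\|_\HH^p+\|S^y(\sigma^\dag)\|_\HH^p)$, giving
\begin{equation*}
\E_{(x,y)}\big[e^{\gamma\bar\sigma}1_{\{\bar\sigma<\infty\}}\big]\le C\,\E_{(x,y)}\big[e^{\gamma\sigma^\dag}\big(1+\|S^x(\sigma^\dag)\|_\HH^p+\|S^y(\sigma^\dag)\|_\HH^p\big)1_{\{\sigma^\dag<\infty\}}\big].
\end{equation*}
Next, using $\sigma^\dag=\sigma+\hat\sigma(S^{x,y}(\sigma))$ and the strong Markov property at $\sigma$, it suffices to establish a norm-carrying strengthening of Lemma \ref{l:SigInf}(2): for states with $\|z_1-z_2\|_\HH\le d$,
\begin{equation*}
\E_{(z_1,z_2)}\big[e^{\gamma\hat\sigma}\big(1+\|S^x(\hat\sigma)\|_\HH^p+\|S^y(\hat\sigma)\|_\HH^p\big)1_{\{\hat\sigma<\infty\}}\big]\le C\big(1+\|z_1\|_\HH^p+\|z_2\|_\HH^p\big),
\end{equation*}
and then, feeding the resulting factor $C(1+\|S^x(\sigma)\|_\HH^p+\|S^y(\sigma)\|_\HH^p)$ into the outer expectation, a matching norm-carrying strengthening of Theorem \ref{t:SigDEst}, namely $\E_{(x,y)}[e^{\gamma\sigma}(1+\|S^x(\sigma)\|_\HH^p+\|S^y(\sigma)\|_\HH^p)1_{\{\sigma<\infty\}}]\le C(1+\|x\|_\HH^p+\|y\|_\HH^p)$. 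Chaining these three estimates yields the asserted bound.

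The technical heart, and the step I expect to be the main obstacle, is producing these norm-carrying bounds, i.e. controlling the \emph{polynomial growth} of $\|S^{x,y}\|_\HH^p$ at the random stopping time while simultaneously retaining an \emph{exponential moment} of the number of steps. The mechanism is the one-step estimate of Lemma \ref{l:SolEst}(1): writing $V_k:=1+\|S^x(\tl\tau_k)\|_\HH^p+\|S^y(\tl\tau_k)\|_\HH^p$, the Markov property gives $\E[V_{k+1}\mid\mcl F_{\tl\tau_k}]\le \rho\,V_k+C_0$ with $\rho:=(3^{p-1}\vee 1)\,\E[e^{-\lambda_1 p(\tl\tau_{k+1}-\tl\tau_k)}]$, and the hypothesis $T>T_0=\frac{(p-1)\log 3}{p\lambda_1}\vee 0$ is exactly what forces $\rho<1$. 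Iterating this contraction and combining it with the geometric tail of $\hat\sigma$ from Lemma \ref{l:SigInf} (respectively the exponential tail of $\sigma$ from Theorem \ref{t:SigDEst}) — either via Hölder's inequality, at the harmless cost of slightly lowering the moment exponent since (A1) holds for every $p\in(0,\alpha)$, or by building a combined supermartingale of the form $e^{\gamma k}V_k$ — delivers the desired estimates provided $\gamma$ is chosen so small that $e^{\gamma}\rho<1$. Finally, one records that the construction of $\bar\sigma$ in \eqref{d:BarSig} was designed precisely to guarantee $\|S^{x,y}(\bar\sigma)\|_\HH\le M$, cf. \eqref{e:SBarSigM}, which is what makes $\bar\sigma$ a useful renewal point for the iteration used in the proof of the main theorem.
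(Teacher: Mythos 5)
Your proposal is correct and follows essentially the same route as the paper, which simply defers to Lemma 5.2 of \cite{Xu14}: there too the bound is obtained by splitting $\bar\sigma=\sigma+\hat\sigma+\tl\sigma$ via \eqref{d:SigDag}--\eqref{d:BarSig}, applying the strong Markov property at $\sigma$ and $\sigma^\dag$, and interpolating (H\"older plus the one-step drift from Lemma \ref{l:SolEst}(1), which is exactly where $T>T_0$ is used) between the exponential moments of Theorems \ref{l:TauPro}, \ref{t:SigDEst} and Lemma \ref{l:SigInf} and the $p$-th moments of the chain at the stopping times. You correctly isolate the norm-carrying estimates as the only ingredient not already contained in the quoted results, and your supermartingale mechanism with $e^{\gamma}\rho<1$ is the standard (and the reference's) way to supply them.
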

\begin{proof}
See Lemma 5.2 in \cite{Xu14}.
\end{proof}

Define $\bar \sigma_0=0$, for all $k \ge 0$ we define
$$\bar \sigma_{k+1}=\bar \sigma_k+\bar \sigma(S^{x,y}(\bar \sigma_k), d, M), $$
it is easy to see that each $\bar \sigma_k$ depends on $x, y, d, M$.

\begin{lem} \label{l:BarSigK}
Let $k \in \N$. For all $x,y \in \HH$, we have
\Be
\PP_{(x,y)} \left(\bar \sigma_k<\infty\right) \le 1/2^k.
\Ee
\end{lem}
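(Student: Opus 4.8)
The plan is to prove the estimate by induction on $k$, the engine being the strong Markov property of the coupling chain $(S^{x,y}(k))_{k \ge 0}$ combined with the escape estimate $\PP_{(x,y)}(\hat\sigma = \infty) > 1/2$ from Lemma \ref{l:SigInf}(1). The base case $k = 0$ is immediate, since $\bar\sigma_0 = 0$ gives $\PP_{(x,y)}(\bar\sigma_0 < \infty) = 1 = 1/2^0$.

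The crucial preliminary step I would isolate is the \emph{uniform one-step bound}
$$\sup_{(u,v) \in \HH \times \HH} \PP_{(u,v)}\big(\bar\sigma(u,v,d,M) < \infty\big) \le \frac{1}{2}.$$
To establish it, I would first read off from the definitions \eqref{d:SigDag} and \eqref{d:BarSig} that $\bar\sigma = \sigma^\dag + \tl\sigma(S^{x,y}(\sigma^\dag),M)$ and $\sigma^\dag = \sigma + \hat\sigma(S^{x,y}(\sigma))$ are sums of nonnegative terms, so that
$$\{\bar\sigma < \infty\} \subseteq \{\sigma^\dag < \infty\} \subseteq \{\sigma < \infty\} \cap \{\hat\sigma(S^{x,y}(\sigma)) < \infty\}.$$
On the event $\{\sigma < \infty\}$ the definition \eqref{d:SigD} of $\sigma$ forces $\|S^x(\sigma) - S^y(\sigma)\|_\HH \le d$, so the hypothesis of Lemma \ref{l:SigInf} is met at the random starting point $S^{x,y}(\sigma)$. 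Applying the strong Markov property at $\sigma$ then yields
$$\PP_{(u,v)}(\bar\sigma < \infty) \le \E_{(u,v)}\Big[1_{\{\sigma < \infty\}}\, \PP_{S^{x,y}(\sigma)}\big(\hat\sigma < \infty\big)\Big] \le \frac{1}{2}\,\PP_{(u,v)}(\sigma < \infty) \le \frac{1}{2},$$
where the middle inequality uses $\PP_{(w_1,w_2)}(\hat\sigma < \infty) = 1 - \PP_{(w_1,w_2)}(\hat\sigma = \infty) < 1/2$ from Lemma \ref{l:SigInf}(1), valid for every admissible pair $(w_1,w_2)$ with $\|w_1 - w_2\|_\HH \le d$.

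With the uniform bound in hand, the inductive step is short: since $\bar\sigma_{k+1} = \bar\sigma_k + \bar\sigma(S^{x,y}(\bar\sigma_k), d, M)$, the event $\{\bar\sigma_{k+1} < \infty\}$ sits inside $\{\bar\sigma_k < \infty\}$, and applying the strong Markov property at the stopping time $\bar\sigma_k$ gives
$$\PP_{(x,y)}(\bar\sigma_{k+1} < \infty) = \E_{(x,y)}\Big[1_{\{\bar\sigma_k < \infty\}}\, \PP_{S^{x,y}(\bar\sigma_k)}\big(\bar\sigma < \infty\big)\Big] \le \frac{1}{2}\,\PP_{(x,y)}(\bar\sigma_k < \infty).$$
Combining this with the induction hypothesis $\PP_{(x,y)}(\bar\sigma_k < \infty) \le 1/2^k$ produces $\PP_{(x,y)}(\bar\sigma_{k+1} < \infty) \le 1/2^{k+1}$, which closes the induction.

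I expect the main obstacle to be the careful and repeated invocation of the strong Markov property at the nested stopping times $\sigma$, $\sigma^\dag$ and $\bar\sigma_k$, together with the bookkeeping needed to guarantee that the escape estimate of Lemma \ref{l:SigInf}(1) is applied precisely at a moment when the two coupled components lie within distance $d$ of each other, so that its hypothesis $\|x - y\|_\HH \le d$ genuinely holds. All of this rests on the standing restriction $0 < d < (1/(4\kappa))^{1/\beta_2}$ under which Lemma \ref{l:SigInf} was established; the remaining manipulations are routine conditioning.
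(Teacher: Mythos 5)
Your proof is correct and follows essentially the same route as the argument the paper defers to (Lemma 5.3 of \cite{Xu14}): one establishes the uniform one-step bound $\PP_{(u,v)}(\bar\sigma<\infty)\le 1/2$ from the inclusion $\{\bar\sigma<\infty\}\subseteq\{\sigma<\infty\}\cap\{\hat\sigma(S^{x,y}(\sigma))<\infty\}$ together with Lemma \ref{l:SigInf}(1) (applicable because $\|S^x(\sigma)-S^y(\sigma)\|_{\HH}\le d$ on $\{\sigma<\infty\}$), and then iterates via the strong Markov property at $\bar\sigma_k$. No gaps.
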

\begin{proof}
See Lemma 5.3 in \cite{Xu14}.
\end{proof}

\subsection{Proof of the main theorem} \label{s:PfMaiThm}
\begin{proof} [{\bf Proof of Theorem \ref{t:MaiThm}}]
The existence of invariant measures has been established in \cite{PXZ10}.
According to \cite[Sect. 2.2.]{Shi08}, the inequality \eqref{2.5} in the theorem implies
the uniqueness of the invariant measure. So now we only need to show \eqref{2.5}, by
\cite{Shi08} again,
it suffices to show that for all $p \in (0, \alpha)$ we have
\Be \label{e:EquInqMai}
 |P_tf(x)-P_t f(y)| \leq C e^{-ct}\|f\|_1 (1+\|x\|_{\HH}^{p}+\|y\|_{\HH}^p) \ \ \ \forall \ f \in L_b(\mathbb H),
\Ee
where $C,c$ depend on $p,\beta,K, \|F\|_{1}, \lambda_1, \lambda_{D+1}$.

Define $g(x,t):=\E f(X^x_t)$, it is easy to see
\Be \label{e:BouG}
\|g\|_0 \le \|f\|_0,
\Ee
 by the
third inequality in Lemma \ref{l:SolEst} we further have
\
\Be \label{e:LipG}
\begin{split}
|g(x, t)-g(y, t)| &\le \E \left[\left|f(X^{x}(t))-f(X^{y}(t))\right|\right] \\
& \le \|f\|_1 \E \left[\|X^{x}(t)-X^{y}(t)\|_{\HH}\right]  \\
& \le \|f\|_1 e^{t \|F\|_{\rm Lip}}\|x-y\|_{\HH}.
\end{split}
\Ee
By strong Markov property, on the set $\{\tau_l \le t\}$ we have
$$g(u_x, t-\tau_l)=\E[f(X^x(t))|X^x(\tau_l)], \ \ \ \ g(u_y, t-\tau_l)=\E[f(X^y(t))|X^y(\tau_l)],$$
where $u_x=X^x(\tau_l), u_y=X^y(\tau_l)$.

To prove \eqref{e:EquInqMai}, we claim and prove below that for all $l \in \N$ and $f \in L_b(\mathbb H)$
\Be \label{e:CouFEst}
\begin{split}
& \left|\E \left[g(u_x, t-\tau_l)-g(u_y, t-\tau_l)\right]1_{\{\tau_l\leq t\}}\right| \\
 \le & \|f\|_0 2^{-\e l+1}+C\|f\|_0 (e^{-\vartheta l/8}+e^{-\frac{\gamma l}4}) (1+\|x\|_{\HH}^p+\|y\|_{\HH}^p)+\e d l\|f\|_1 \bigg(\frac 2{\lambda_{D+1}}\bigg)^{l/4}.
\end{split}
\Ee
Let $l \ge 2$ be some natural number to be determined later. Then, we easily have
\
\Bes
\begin{split}
|P_tf(x)-P_t f(y)|&=\left|\E[f(X^x(t))]-\E[f(X^y(t))]\right| \\
& \le \left|\E \left\{\left[f(X^x(t))-f(X^y(t))\right]1_{\left\{\tau_{l}>t\right\}} \right\}\right| \\
& \ \ +\left|\E \left\{\left[f(X^x(t))-f(X^y(t))\right]1_{\left\{\tau_{l} \le t\right\}} \right\}\right| \\
& \le 2 \|f\|_0 \PP\left(\tau_{l}>t\right)+\left|\E\left\{\left[g(X^x(\tau_l), t-\tau_l)-g(X^y(\tau_l), t-\tau_l)\right]1_{\left\{ \tau_{l} \le t\right\}}\right\}\right| \\
& \le  2 \|f\|_0 \left(2 e^{\gamma_K T/2}\right)^{l} e^{-\gamma_K t/2}+ \|f\|_0 2^{-\e l+1}+C\|f\|_0 (e^{-\vartheta l/8}+e^{-\frac{\gamma l}4}) (1+\|x\|_{\HH}^p+\|y\|_{\HH}^p) \\
&\ \ \ +\e d l\|f\|_1 e^{t \|F\|_{\rm Lip}}\bigg(\frac 2{\lambda_{D+1}}\bigg)^{l/4},
\end{split}
\Ees
where the last inequality is by \eqref{e:CouFEst} and the following easy estimate
$$\PP(\tau_{l}>t) \le \left(2 e^{\gamma_K T/2}\right)^{l} e^{-\gamma_K t/2}.$$
Choosing $l=[\delta t]$ with $\delta>0$ sufficiently small (depending on $p,\lambda_1,\|F\|_{1}, K, \beta, M$) and then choosing $D$ sufficiently large (depending on $p, \lambda_1, \|F\|_{1}, K, \beta, M, \delta$) so that $\lambda_{D+1}$ sufficiently large, we immediately get
$$|P_tf(x)-P_t f(y)| \le C_0 e^{-c_0 t} \|f\|_1(1+\|x\|_{\HH}^p+\|y\|_{\HH}^p),$$
where $C_0$ and $c_0$ both depending on $p, \lambda_1, \lambda_{D+1},\|F\|_{1}, K, \beta, M$.

\vskip 3mm
Now it remains to show \eqref{e:CouFEst}.
Let $m=[\e l]$ with $0<\e<1/2$ to be determined later. By the coupling in Lemma \ref{l:CouLem2}, we have
\Bes
\begin{split}
& \ \left|\E \left[g(X^x(\tau_l), t-\tau_l)-g(X^y(\tau_l), t-\tau_l)\right]1_{\{\tau_l\leq t\}}\right| \\
=&\left|\E \left[g(S^x(l), t-\tl \tau_l)-g(S^y(l), t-\tl \tau_l)\right]1_{\{\tl \tau_l\leq t\}}\right| \leq J_{1}+J_{2},
\end{split}
\Ees
where we write $S^x(l)=S^x(\tl \tau_l)$ for simplicity and
\Bes
\begin{split}
 & J_{1}=\left|\E \left\{ \left(g( S^x(l), t-\tl \tau_l)-g(S^y(l), t-\tl \tau_l)\right)1_{\left\{\tl \tau_{l} \le t\right\}} 1_{\{\bar \sigma_m<\infty\}}\right\}\right|,\\ & J_{2}=\left|\E \left\{ \left(g(S^x(l), t-\tl \tau_l)-g(S^y(l), t-\tl \tau_l)\right)1_{\left\{\tl \tau_{l} \le t\right\}} 1_{\{\bar \sigma_m=\infty\}}\right\}\right|.
\end{split}
\Ees
By Lemma \ref{l:BarSigK}, we have
\Be
J_{1} \le 2 \|f\|_0 \PP_{(x,y)}\{\bar \sigma_m<\infty\}\le \frac {\|f\|_0}{2^{m-1}} \le \|f\|_0 2^{-\e l+1}.
\Ee
Observe
\Bes
J_2=J_{2,1}+J_{2,2},
\Ees
where
\Bes
\begin{split}
&J_{2,1}=\sum_{i=0}^{m-1}\E \left\{\left|g(S^x(l), t-\tl \tau_l)-g(S^y(l), t-\tl \tau_l)\right|1_{\left\{\tl \tau_{l} \le t\right\}}1_{\{l/2<\bar \sigma_i<\infty, \ \bar \sigma_{i+1}=\infty\}}\right\}, \\
&J_{2,2}=\sum_{i=0}^{m-1}\E \left\{\left|g(S^x(l), t-\tl \tau_l)-g(S^y(l), t-\tl \tau_l)\right|1_{\left\{\tl \tau_{l} \le t\right\}} 1_{\{\bar \sigma_i \le l/2, \ \bar \sigma_{i+1}=\infty\}}\right\}.
\end{split}
\Ees
By Chebyshev inequality and Lemma \ref{l:BarSigEst},
\Bes
\begin{split}
J_{2,1}  & \le 2\|f\|_0 \sum_{i=0}^{m-1}\PP_{(x,y)}\left\{\frac l2<\bar \sigma_i<\infty\right\}
\le 2\|f\|_0 e^{-\frac l 2 \gamma} \sum_{i=0}^{m-1}   \E_{(x,y)}[e^{\gamma \bar \sigma_i}]
\end{split}
\Ees
and
\Bes
\begin{split}
\E_{(x,y)}[e^{\gamma \bar \sigma_i}]&=\E_{(x,y)} \left[e^{\gamma \bar \sigma_1}\E_{S(\bar \sigma_1)}
\left[e^{\gamma (\bar \sigma_2-\bar \sigma_1)}\cdots \E_{S(\bar \sigma_{i-1})}\left[e^{\gamma (\bar \sigma_i-\bar \sigma_{i-1})}\right]\cdots \right]\right]\\
& \leq C^i e^{\gamma i} (1+M^{p})^{i-1} (1+\|x\|_{\HH}^{p}+\|y\|_{\HH}^p),
\end{split}
\Ees
where the last inequality is by \eqref{e:SBarSigM}.
Hence,
$$J_{2,1} \le 2\|f\|_0 e^{-\frac l 2 \gamma} \sum_{i=0}^{m-1} e^{\gamma i} (1+M^{p})^{i-1} (1+\|x\|_{\HH}^{p}+\|y\|_{\HH}^p).$$
Recall $m=[\e l]$, as $\e>0$ is sufficiently small we obtain
\Bes
J_{2,1} \le e^{-\frac{\gamma l}4} \|f\|_0 (1+\|x\|_{\HH}^p+\|y\|_{\HH}^p).
\Ees
\vskip 2mm
It remains to estimate $J_{2,2}$. Recall the definition of
$\sigma, \hat \sigma, \sigma^\dag, \bar \sigma, \tl \sigma$ and note that
\Be \label{e:BarSigMore}
\bar \sigma_{i+1}=\bar \sigma_i+\sigma+\hat \sigma+\tl \sigma,
\Ee
with
$\sigma=\sigma(S^{x,y}(\bar \sigma_i),d)$,
$\hat \sigma=\hat \sigma(S^{x,y}(\bar \sigma_i+\sigma))$, $\tl \sigma=\tl \sigma(S^{x,y}(\bar \sigma_i+\sigma+\hat \sigma),M)$.
Observe that
\Bes
J_{2,2}=J_{2,2,1}+J_{2,2,2},
\Ees
with
\Bes
\begin{split}
& J_{2,2,1}:=\sum_{i=0}^{m-1}\E \left[\left|g(S^x(l), t-\tl \tau_l)-g(S^y(l), t-\tl \tau_l)\right|1_{\left\{\tl \tau_{l} \le t\right\}} 1_{\{\bar \sigma_i \le l/2, \ \bar \sigma_i+\sigma>\frac{3l}4, \bar \sigma_{i+1}=\infty\}}\right], \\
&J_{2,2,2}:=\sum_{i=0}^{m-1} \E \left[\left|g(S^x(l), t-\tl \tau_l)-g(S^y(l), t-\tl \tau_l)\right|1_{\left\{\tl \tau_{l} \le t\right\}} 1_{\{\bar \sigma_i \le l/2, \bar \sigma_i+\sigma \le \frac{3l}4, \bar \sigma_{i+1}=\infty\}}\right].
\end{split}
\Ees
By strong Markov property, Chebyshev inequality, Theorem \ref{t:SigDEst} and the clear fact $\|S^{x,y}(\bar \sigma_i)\|_{\HH}\le M$ for all $i \ge 1$, as $\e>0$
is sufficiently small we have
\Be
\begin{split}
J_{2,2,1} & \le 2 \|f\|_0 \sum_{i=0}^{m-1} \E_{(x,y)}  \left[\PP_{u_i}(\sigma>l/4) \right] \\
& \le  C  \|f\|_0 e^{-\vartheta l/4}\big[(m-1)(1+M^p)+(1+\|x\|_{\HH}^p+\|y\|_{\HH}^p)\big] \\
& \le C\|f\|_0 e^{-\vartheta l/8} (1+\|x\|_{\HH}^p+\|y\|_{\HH}^p),
\end{split}
\Ee
where $u_i=S^{x,y}(\bar \sigma_i)$ and $C, \vartheta$ depend on  $d,\lambda_1, \lambda_{D+1},\|F\|_1, p, M$.
\vskip 2mm

As for $J_{2,2,2}$, recall \eqref{e:BarSigMore} and note $\tl \sigma<\infty$ a.s. from Theorem \ref{l:TauPro}, we have
\Be
\begin{split}
J_{2,2,2}
&=\sum_{i=0}^{m-1} \E \left\{\left|g(S^x(l), t-\tl \tau_l)-g(S^y(l), t-\tl \tau_l)\right|1_{\left\{\tl \tau_{l} \le t\right\}} 1_{\{\bar \sigma_i \le l/2, \bar \sigma_i+\sigma \le \frac{3l}4, \hat \sigma+\tl \sigma=\infty\}}\right\} \\
&=\sum_{i=0}^{m-1} \E \left\{\left|g(S^x(l), t-\tl \tau_l)-g(S^y(l), t-\tl \tau_l)\right|1_{\left\{\tl \tau_{l} \le t\right\}} 1_{\{\bar \sigma_i \le l/2, \bar \sigma_i+\sigma \le \frac{3l}4, \hat \sigma=\infty\}}\right\}.
\end{split}
\Ee
 It follows from the above equality, \eqref{e:LipG} and strong Markov property that
\Bes
\begin{split}
J_{2,2,2} & \le \|f\|_1 e^{t \|F\|_{\rm Lip}} \sum_{i=0}^{m-1} \E\left[\|S^x(l)-S^y(l)\|_{\HH} 1_{\left\{\tl \tau_{l} \le t\right\}}
1_{\{\bar \sigma_i+\sigma \le \frac{3l}4, \hat \sigma=\infty\}}\right] \\
&=\|f\|_1 e^{t \|F\|_{\rm Lip}} \sum_{i=0}^{m-1} \E\left[\E_{u}\left(\|S^x(l)-S^y(l)\|_{\HH} 1_{\left\{\tl \tau_{l} \le t\right\}} 1_{\{\hat \sigma=\infty\}}\right)
1_{\{\bar \sigma_i+\sigma \le \frac{3l}4\}}\right],
\end{split}
\Ees
where $u=S^{x,y}(\bar \sigma_i+\sigma)$. By the definition of $\sigma$ we have $\|u_x-u_y\|_{\HH}<d$. By the definition \eqref{d:SigXY} with $\hat \sigma=\hat \sigma(S^{x,y}(\bar \sigma_i+\sigma))$ and the previous inequality, as $\lambda_{D+1}>0$ is sufficiently large, depending on $T,K,\|F\|_{\rm Lip}$, we have
\Bes
\begin{split}
J_{2,2,2}  & \le \|f\|_1  \sum_{i=0}^{m-1} \E\left[\E_u(\delta_0 ... \delta_{l/4})\|u_x-u_y\|_{\HH}\right] \\
& \le  d m\|f\|_1 \bigg(\frac{\gamma_K}{\gamma_K+\lambda_{D+1}}e^{-\lambda_{D+1} T}+2 \frac{\|F\|_{\rm Lip}}{\|F\|_{\rm Lip}+\lambda_{D+1}}\bigg)^{l/4} \\
& \le \e d l\|f\|_1 \bigg(\frac 2{\lambda_{D+1}}\bigg)^{l/4}.
\end{split}
\Ees

\vskip 2mm
\noindent Collecting the bounds for $J_{2,2,1}, J_{2,2,2}$, $J_{2,1}$, $J_{1}$, we have that there exist some $\epsilon, C>0$ depending on
$p, \lambda_1, \lambda_{D+1},\|F\|_{1}, K$ such that
\Bes
\begin{split}
&\left|\E \left[g(X^x(\tau_l), t-\tau_l)-g(X^y(\tau_l), t-\tau_l)\right]1_{\{\tau_l\leq t\}}\right|\\
 \le & J_1+J_{2,1}+J_{2,2,1}+J_{2,2,2}\\
 \le & \|f\|_0 2^{-\e l+1}+C\|f\|_0 (e^{-\vartheta l/8}+e^{-\frac{\gamma l}4}) (1+\|x\|_{\HH}^p+\|y\|_{\HH}^p)+\e d l\|f\|_1 \bigg(\frac 2{\lambda_{D+1}}\bigg)^{l/4},
\end{split}
\Ees
this proves the desired \eqref{e:CouFEst}.
\end{proof}


\bibliographystyle{amsplain}

\end{document}